\newtheorem{theorem}{Theorem}[section]
\newtheorem{proposition}[theorem]{Proposition}
\newtheorem{lemma}[theorem]{Lemma}
\newtheorem{corollary}[theorem]{Corollary}
\theoremstyle{definition}
\newtheorem{definition}[theorem]{Definition}
\newtheorem{example}[theorem]{Example}
\newtheorem{conjecture}[theorem]{Conjecture}
\newtheorem{remark}[theorem]{Remark}
\newcommand{\ZZ}{ \ensuremath{\mathbb{Z}}}
\def\cocoa{{\hbox{\rm C\kern-.13em o\kern-.07em C\kern-.13em o\kern-.15em A}}}
\newcommand{\cd}{\textbf{cd}}
\newcommand{\cc}{\mathbf{c}}
\newcommand{\dd}{\mathbf{d}}
\newcommand{\aaa}{\mathbf{a}}
\newcommand{\bb}{\mathbf{b}}
\newcommand{\F}{\mathcal{F}}
\newcommand{\sd}{\mathrm{sd}}
\newcommand{\A}{\mathcal{A}}
\newcommand{\B}{\mathcal{B}}
\begin{document}

\title[On the \cd-index and $\gamma$-vector of S*-shellable CW-spheres]
{On the \cd-index and $\gamma$-vector of \\
S*-shellable CW-spheres}

\author{Satoshi Murai}
\address{
Satoshi Murai,
Department of Mathematical Science,
Faculty of Science,
Yamaguchi University,
1677-1 Yoshida, Yamaguchi 753-8512, Japan
}

\author{Eran Nevo}
\address{
Department of Mathematics,
Ben Gurion University of the Negev,
Be'er Sheva 84105, Israel
}
\email{nevoe@math.bgu.ac.il}

\thanks{
Research of the first author was partially supported by KAKENHI 22740018.
Research of the second author was partially supported by an NSF Award DMS-0757828.
}

\keywords{polytope, $\gamma$-vector, $cd$-index, balanced simplicial complex}

\begin{abstract}
We show that the $\gamma$-vector of the order complex of any polytope is the $f$-vector of a balanced simplicial complex. This is done by proving this statement for a subclass of Stanley's S-shellable spheres which includes all polytopes. The proof shows that certain parts of the $\cd$-index, when specializing $\cc=1$ and considering the resulted polynomial in $\dd$, are the $f$-polynomials of  simplicial complexes that can be colored with ``few" colors. We conjecture that the $\cc\dd$-index of a regular CW-sphere is itself the \emph{flag} $f$-vector of a colored simplicial complex in a certain sense.
\end{abstract}

\maketitle

\section{Introduction}
Let $P$ be an ($n-1$)-dimensional regular CW-sphere (that is, a regular CW-complex which is homeomorphic to an ($n-1$)-dimensional sphere).
In face enumeration,
one of the most important combinatorial invariants of $P$
is the \cd-index.
The \cd-index $\Phi_P(\cc,\dd)$ of $P$ is a non-commutative polynomial in the variables
$\cc$ and $\dd$ that encodes the flag $f$-vector of $P$.
By the result of Stanley \cite{St1} and Karu \cite{Ka},
it is known that the \cd-index $\Phi_P(\cc,\dd)$ has non-negative integer coefficients.
On the other hand,
a characterization of the possible $\cd$-indices for regular CW-spheres, or other related families, e.g Gorenstien$^*$ posets, is still beyond reach.
In this paper we take a step in this direction and establish some  non-trivial upper bounds, as we detail now.

If we substitute $1$ for $\cc$ in $\Phi_P(\cc,\dd)$, we obtain a polynomial of the form
$$\Phi_P(1,\dd)=\delta_0+\delta_1 \dd + \cdots + \delta_{\lfloor \frac n 2 \rfloor} \dd ^{\lfloor \frac n 2 \rfloor},$$
where $\lfloor \frac n 2 \rfloor$ is the integer part of $\frac n 2$,
such that each $\delta_i$ is a non-negative integer.
In other words, $\delta_i$ is the sum of coefficients of monomials in $\Phi_P(\cc,\dd)$ for which $\dd$ appears $i$ times.

Let $\Delta$ be a (finite abstract) simplicial complex on the vertex set $V$.
We say that $\Delta$ is \emph{$k$-colored} if there is a map $c: V \to [k]=\{1,2,\dots,k\}$, called a \textit{$k$-coloring map of  $\Delta$}, such that
if $\{x,y\}$ is an edge of $\Delta$ then $c(x) \ne c(y)$.
Let $f_{i}(\Delta)$ denote the number of elements $F \in \Delta$ having cardinality $i+1$,
where $f_{-1}(\Delta)=1$.
The main result of this paper is the following.

\begin{theorem}
\label{main}
Let $P$ be an $(n-1)$-dimensional S*-shellable regular CW-sphere, and let $\Phi_P(1,\dd)=\delta_0+\delta_1 \dd + \cdots + \delta_{\lfloor \frac n 2 \rfloor} \dd ^{\lfloor \frac n 2 \rfloor}$.
Then there exists an $\lfloor \frac n 2 \rfloor$-colored simplicial complex $\Delta$ such that
$$\delta_i = f_{i-1}(\Delta) \ \ \mbox{ for } i=0,1,\dots,\lfloor \frac n 2 \rfloor.$$
\end{theorem}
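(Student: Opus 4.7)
My plan is to induct on the S*-shelling of $P$, maintaining throughout a partial $\lfloor n/2\rfloor$-colored simplicial complex whose face numbers track the $\delta$-vector of the cells produced so far. First I would establish an additive decomposition
\[
\Phi_P(\cc,\dd) \;=\; \sum_{j=1}^{s} \Psi_j(\cc,\dd),
\]
where $\Psi_j$ is the contribution to the $\cd$-index of the $j$-th facet $F_j$ in the S*-shelling $F_1,\dots,F_s$. Such a decomposition is standard for Stanley's S-shellings via a ``restriction'' attached to each $F_j$, and S*-shellability should refine this to a form with more structured summands. Setting $\cc=1$ then gives $\Psi_j(1,\dd)=\sum_i a_{j,i}\dd^i$ with non-negative integer coefficients that sum up to $\sum_i \delta_i \dd^i$.

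The next step is to interpret each monomial occurrence $\dd^i$ (counted with the appropriate multiplicity $a_{j,i}$) as an $(i-1)$-face added to $\Delta$. I would color the vertices of $\Delta$ by the positional rank of their underlying $\dd$ inside the ambient $\cd$-monomial of degree $n$: if a vertex originates from the $k$-th $\dd$ read left-to-right, it receives color $k\in\{1,\dots,\lfloor n/2\rfloor\}$. An $(i-1)$-face then corresponds to a $\cd$-monomial with $i$ $\dd$'s whose positions are, tautologically, $1<2<\cdots<i$, and since $i\leq\lfloor n/2\rfloor$ the palette is large enough and each face is rainbow-colored. The task reduces to giving a bona fide simplicial complex structure: one must decide when two face-occurrences produced by (possibly different) shelling steps $\Psi_j,\Psi_{j'}$ share a vertex, and to do so consistently with the positional coloring.

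The main obstacle is exactly this gluing/identification step: a vertex of $\Delta$ may be referenced by many shelling contributions, and its color must be unambiguous. Equivalently, shelling data from distinct facets must agree on common sub-flags. I expect the extra structure imposed by S*-shellability---the starred refinement of S-shellability that still encompasses all polytopes---to supply precisely the compatibility needed so that the restrictions fit together coherently along shared boundary data, and that the inductive addition of the fresh faces produced by $\Psi_j$ preserves both the identification $f_{i-1}(\Delta)=\sum_{k\leq j}a_{k,i}$ and the properness of the coloring. Once the inductive step is verified, taking $j=s$ yields the $\lfloor n/2\rfloor$-colored simplicial complex $\Delta$ with $f_{i-1}(\Delta)=\delta_i$ asserted in the theorem. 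A secondary technical issue is to rule out overcounting when passing from $\Phi_P(\cc,\dd)$ to $\Phi_P(1,\dd)$; this should follow from the fact that the specialization $\cc\mapsto 1$ simply partitions the $\cd$-monomials of $\Phi_P$ by their number of $\dd$'s, matching the face-dimension decomposition of $\Delta$.
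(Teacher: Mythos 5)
The gap you identify---the ``gluing/identification step''---is not a secondary obstacle; it is the entire content of the theorem, and your proposal offers no mechanism to resolve it. You propose to treat each $\dd^i$-occurrence as an abstract face with colors $1,\dots,i$, but you never say what the \emph{vertices} of $\Delta$ are, never verify that the resulting family of sets is closed under taking subsets, and never explain how contributions $\Psi_j$ from different facets of the shelling share sub-faces. Hoping that ``S*-shellability should supply precisely the compatibility needed'' is a statement of faith, not a proof. In fact, the paper's own Conjecture~\ref{conj:flag f-vector} is essentially your positional-coloring idea in its strongest form (assign color sets $F_w$ recording the positions of the $\dd$'s in each monomial $w$), and that conjecture remains open even for polytopes.

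The paper's actual proof sidesteps the explicit gluing problem entirely by working at the level of $f$-vectors rather than concrete complexes. The key tool is the Frankl--F\"uredi--Kalai theorem \cite{FFK}, invoked in Lemma~\ref{rem:k-FFK}: given two $k$-colored $f$-vectors with a coefficientwise inequality, their compressed representatives nest, so one can cone a fresh color-$(k+1)$ vertex over a subcomplex without any ad hoc vertex identifications. The other essential ingredient is the Ehrenborg--Karu monotonicity under subdivision (Lemmas~\ref{2-4} and~\ref{2-5}), which supplies the coefficientwise inequalities needed to apply Lemma~\ref{rem:k-FFK} at each step. These are then organized in Proposition~\ref{prop:main} via a nested induction on dimension, using the decomposition $\Phi=\Phi_0+\sum_k\Phi_k\,\dd\,\cc^{n-k}$ by position of the \emph{last} $\dd$ (not by facet of the shelling as you propose), together with Stanley's recursion (Lemma~\ref{2-3}). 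Your proposal names the right starting point (Stanley's shelling recursion) and an appealing target picture (positional coloring of the $\dd$'s), but it is missing both the FFK compression argument, which replaces the gluing you cannot do, and the Ehrenborg--Karu inequality, which makes the compression argument applicable.
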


The precise definition of the S*-shellability is given in Section 2.
The most important class of S*-shellable CW-spheres are the boundary complexes of polytopes.
By the Kruskal-Katona Theorem (see e.g. \cite[II, Theorem 2.1]{St2}), the above theorem gives certain upper bound on $\delta_i$ in terms of $\delta_{i-1}$. Better upper bounds are given by Frankl-F{\"u}redi-Kalai theorem which characterizes the $f$-vectors of $k$-colored complexes \cite{FFK}.

The numbers $\delta_0,\delta_1,\delta_2,\dots$ relate to the $\gamma$-vector (see Section 4 for the definition)
of the barycentric subdivision (order complex) of $P$,
namely the simplicial complex whose elements are the chains of nonempty cells in $P$ ordered by inclusion.
Indeed, as an application of Theorem \ref{main} we prove the following.

\begin{theorem}
\label{second}
Let $P$ be an $(n-1)$-dimensional S*-shellable regular CW-sphere and let $\sd (P)$ be the barycentric subdivision of $P$.
Then there exists an $\lfloor \frac n 2 \rfloor$-colored simplicial complex $\Gamma$ such that
$$\gamma_i(\sd(P))=f_{i-1}(\Gamma) \ \ \mbox{ for } i=0,1,\dots,\lfloor \frac n 2 \rfloor.$$
\end{theorem}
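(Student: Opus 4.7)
The plan is to reduce Theorem~\ref{second} to Theorem~\ref{main} via an explicit formula relating the $\gamma$-vector of $\sd(P)$ to the $\delta$-vector of $P$, and then to realize the resulting numbers by an elementary doubling of the colored complex produced by Theorem~\ref{main}.

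The key preliminary I would establish is the substitution identity
$$h(\sd(P); t) = \Phi_P(1+t, 2t).$$
Since the $(j-1)$-faces of $\sd(P)$ are the length-$j$ chains of nonempty cells of $P$, grouping such chains by their dimension set $S \subseteq \{0, \ldots, n-1\}$ gives $f(\sd(P); t) = \sum_S f_S(P)\, t^{|S|}$. Writing $f_S = \sum_{T \subseteq S} h_T$ and interchanging summations turns this into $\sum_T h_T(P)\, t^{|T|}(1+t)^{n-|T|}$; the standard change of variable $t \mapsto t/(1-t)$ with the factor $(1-t)^n$ then collapses it to $h(\sd(P); t) = \sum_T h_T(P)\, t^{|T|}$. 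This last expression is the $\aaa\bb$-index of $P$ evaluated at $\aaa = 1$, $\bb = t$, which via $\cc = \aaa + \bb$ and $\dd = \aaa\bb + \bb\aaa$ equals $\Phi_P(1+t, 2t)$.

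Extracting the $\gamma$-vector is then immediate. A $\cd$-monomial with exactly $r$ copies of $\dd$ (and $n - 2r$ copies of $\cc$) evaluates under $\cc \mapsto 1+t,\ \dd \mapsto 2t$ to $2^r t^r (1+t)^{n-2r}$. Summing over all such monomials weighted by their coefficients in $\Phi_P$ contributes $2^r \delta_r \cdot t^r (1+t)^{n-2r}$, by the very definition of $\delta_r$. Comparing with the $\gamma$-expansion $h(\sd(P); t) = \sum_i \gamma_i(\sd(P))\, t^i (1+t)^{n-2i}$ yields the clean identity
$$\gamma_i(\sd(P)) = 2^i \delta_i \qquad (0 \le i \le \lfloor n/2 \rfloor).$$

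To finish, apply Theorem~\ref{main} to obtain an $\lfloor n/2 \rfloor$-colored complex $\Delta$ with proper coloring $c : V(\Delta) \to [\lfloor n/2 \rfloor]$ and $f_{i-1}(\Delta) = \delta_i$. Define $\Gamma$ by ``doubling'' $\Delta$: take vertex set $V(\Delta) \times \{0,1\}$ and declare $\{(v_1,\epsilon_1), \ldots, (v_j,\epsilon_j)\}$ a face of $\Gamma$ precisely when the $v_i$ are pairwise distinct and $\{v_1, \ldots, v_j\}$ is a face of $\Delta$. Each $j$-face of $\Delta$ then yields exactly $2^j$ faces of $\Gamma$, so $f_{i-1}(\Gamma) = 2^i \delta_i = \gamma_i(\sd(P))$. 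The pulled-back coloring $(v,\epsilon) \mapsto c(v)$ uses the same $\lfloor n/2 \rfloor$ colors and remains proper, because any edge of $\Gamma$ joins vertices with distinct underlying $v$-coordinates.

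The substantive content of this plan is Theorem~\ref{main} itself; the remaining ingredients---the substitution formula $h(\sd(P); t) = \Phi_P(1+t, 2t)$, the identity $\gamma_i = 2^i \delta_i$, and the doubling---are all short and essentially combinatorial, so I do not anticipate any serious obstacle beyond routine verification of the change-of-variables step.
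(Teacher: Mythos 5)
Your proposal is correct and follows essentially the same route as the paper: derive the identity $\gamma_i(\sd(P)) = 2^i\delta_i$ from the relation $h_i(\sd(P)) = \sum_{|S|=i} h_S(P)$ (equivalently, the substitution $\aaa=1$, $\bb=t$ giving $\cc=1+t$, $\dd=2t$), then apply Theorem~\ref{main} and double the resulting $\lfloor n/2\rfloor$-colored complex. Your doubling construction on $V(\Delta)\times\{0,1\}$ is precisely the paper's $\hat\Delta = \{x_G\cup y_{F\setminus G}: F\in\Delta,\ G\subset F\}$ from Lemma~\ref{4-1}, with the same pulled-back coloring.
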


Recall that an $(n-1)$-dimensional simplicial complex is said to be \emph{balanced} if it is $n$-colored.
If $P$ is the boundary complex of an arbitrary convex $n$-dimensional polytope, then
$\delta_{\lfloor \frac n 2 \rfloor}(P)>0$ and we conclude the following.
\begin{corollary}\label{cor:gammaPolytope}
Let $P$ be the boundary complex of an $n$-dimensional polytope. Then
the $\gamma$-vector of $\sd(P)$ is the $f$-vector of a balanced simplicial complex.
\end{corollary}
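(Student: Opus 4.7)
The plan is a short deduction from Theorem \ref{second}: apply it to $P$, obtain the $\lfloor n/2 \rfloor$-colored complex $\Gamma$, and upgrade ``colored'' to ``balanced'' by checking that $\Gamma$ has full dimension $\lfloor n/2 \rfloor - 1$.

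Concretely, I would first invoke the fact, noted right after Theorem \ref{main}, that the boundary complex of any convex $n$-polytope is an $(n-1)$-dimensional S*-shellable regular CW-sphere. Theorem \ref{second} then supplies an $\lfloor n/2 \rfloor$-colored simplicial complex $\Gamma$ with $f_{i-1}(\Gamma) = \gamma_i(\sd(P))$ for all $i \leq \lfloor n/2 \rfloor$. Since a $(d-1)$-dimensional simplicial complex is balanced exactly when it is $d$-colored, it suffices to show $\dim \Gamma = \lfloor n/2 \rfloor - 1$, that is, $f_{\lfloor n/2 \rfloor - 1}(\Gamma) = \gamma_{\lfloor n/2 \rfloor}(\sd(P)) > 0$.

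For this remaining positivity I would combine the polytope input $\delta_{\lfloor n/2 \rfloor}(P) > 0$ recalled in the introduction (the fact that the top coefficient of the $\cd$-index of a polytope boundary is strictly positive) with the explicit $\delta$-to-$\gamma(\sd)$ translation developed in Section 4. One expects this translation to express each $\gamma_j(\sd(P))$ as a non-negative linear combination of $\delta_0, \dots, \delta_j$ in which $\delta_j$ itself appears with a positive coefficient; specializing $j = \lfloor n/2 \rfloor$ then forces $\gamma_{\lfloor n/2 \rfloor}(\sd(P)) > 0$, so $\Gamma$ is balanced and the corollary follows.

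The only step requiring genuine work is the last one, namely pinning down the explicit shape of the $\delta$-to-$\gamma(\sd)$ substitution and verifying that the coefficient of $\delta_{\lfloor n/2 \rfloor}$ in $\gamma_{\lfloor n/2 \rfloor}(\sd(P))$ is strictly positive. Given the machinery that must already be assembled in the proof of Theorem \ref{second} from Theorem \ref{main}, this should be a routine reading off of that substitution; everything else in the proof of the corollary is the definition of ``balanced'' together with an invocation of Theorem \ref{second}.
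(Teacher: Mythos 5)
Your outline agrees with the paper's in its reduction: invoke Theorem \ref{second} to get a $\lfloor n/2\rfloor$-colored $\Gamma$, observe that such a $\Gamma$ automatically has dimension at most $\lfloor n/2\rfloor-1$, so it is balanced precisely when $f_{\lfloor n/2\rfloor-1}(\Gamma)=\gamma_{\lfloor n/2\rfloor}(\sd(P))>0$; and via the translation $\gamma_i=2^i\delta_i$ displayed explicitly in Section 4 (so there is nothing to ``pin down'' there), this is equivalent to $\delta_{\lfloor n/2\rfloor}(P)>0$.

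The genuine gap is that you cite $\delta_{\lfloor n/2\rfloor}(P)>0$ as a fact ``recalled in the introduction,'' but the introduction merely asserts it without justification; establishing that positivity is exactly the substantive content of the paper's proof of this corollary. The missing argument is: by the Billera--Ehrenborg monotonicity theorem \cite{BE}, the \cd-index of any $n$-polytope dominates, coefficientwise, that of the $n$-simplex $\sigma^n$, and all \cd-coefficients of $\sigma^n$ are strictly positive (e.g.\ by the Ehrenborg--Readdy pyramid formula \cite[Theorem 5.2]{ER}), so $\delta_{\lfloor n/2\rfloor}(P)\geq\delta_{\lfloor n/2\rfloor}(\sigma^n)>0$. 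Without this input your argument is circular, since the positivity you invoke is only established within the very proof you are trying to reconstruct.
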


The above corollary supports the conjecture of Nevo and Petersen \cite[Conjecture 6.3]{NP}
which states that the $\gamma$-vector of a flag homology sphere is the $f$-vector of a balanced simplicial complex.
This conjecture was verified for the barycentric subdivision of simplicial homology spheres (in this case all the cells are simplices) in \cite{Nevo-Petersen-Tenner}.

It would be natural to ask if the above theorems hold for all regular CW-spheres (or more generally, Gorenstein$^*$ posets).
We conjecture a stronger statement on the \cd-index, see Conjecture \ref{conj:flag f-vector}.

This paper is organized as follows:
in Section \ref{sec:2} we recall some known results on the $\cd$-index and define S*-shellability, in Section \ref{sec:3} we prove our main theorem, Theorem \ref{main}, in Section \ref{sec:4} we derive consequences for $\gamma$-vectors and present a conjecture on the \cd-index, Conjecture \ref{conj:flag f-vector}.

\section{\cd-index of S*-shellable CW-spheres}\label{sec:2}

In this section we recall some known results on the \cd-index.

Let $P$ be a graded poset of rank $n+1$ with the minimal element $\hat 0$ and the maximal element $\hat 1$.
Let $\rho$ denote the rank function of $P$.
For $S \subset [n]=\{1,2,\dots,n\}$,
a chain $\hat 0 = \sigma_0 < \sigma_1 <\sigma_2 < \cdots < \sigma_{k+1}=\hat 1$ of $P$
is called an \textit{$S$-flag} if $\{\rho(\sigma_1),\dots,\rho(\sigma_k)\} =S$.
Let $f_S(P)$ be the number of $S$-flags of $P$.
Define $h_S(P)$ by
$$h_S(P)=\sum_{T \subset S} (-1)^{|S|-|T|} f_T (P),$$
where $|X|$ denotes the cardinality of a finite set $X$.
The vectors $(f_S(P): S \subset [n])$ and $(h_S(P): S \subset [n])$ are
called the \textit{flag $f$-vector}  and \textit{flag $h$-vector} of $P$ respectively.

Now we recall the definition of the \cd-index.
For $S \subset [n]$,
we define a non-commutative monomial $u_S=u_1u_2\cdots u_n$ in variables $\aaa$ and $\bb$
by $u_i=\aaa$ if $i \not \in S$ and $u_i=\bb$ if $i \in S$.
Let
$$\Psi_P(\aaa,\bb)= \sum_{S \subset [n]} h_P(S) u_S.$$
For a graded poset $P$,
let $\mathrm{sd}(P)$ be the order complex of $P -\{\hat 0,\hat 1\}$.
Thus
$$\mathrm{sd}(P)=\{\{\sigma_1,\sigma_2,\dots,\sigma_k\} \subset P-\{\hat 0,\hat 1\}: \sigma_1< \sigma_2 < \cdots < \sigma_k\}.$$
We say that $P$ is {\em Gorenstein*} if the simplicial complex $\mathrm{sd}(P)$ is a homology sphere.
It is known that if $P$ is Gorenstein* then $\Psi_P(\aaa,\bb)$ can be written as a polynomial $\Phi_P(\cc,\dd)$ in $\cc=\aaa+\bb$
and $\dd=\aaa \bb + \bb \aaa$ \cite{BK},
and this non-commutative polynomial $\Phi_P(\cc,\dd)$ is called the \textit{$\cc\dd$-index} of $P$.
Moreover,
by the celebrated results due to Stanley \cite{St1} (for convex polytopes)
and Karu \cite{Ka} (for Gorenstein* posets),
the coefficients of $\Phi_P(\cc,\dd)$ are non-negative integers.

Next, we define S*-shellability of regular CW-spheres by slightly modifying the definition of S-shellability
introduced by Stanley \cite[Definition 2.1]{St1}.

Let $P$ be a regular CW-sphere (a regular CW-complex which is homeomorphic to a sphere)
and $\F(P)$ its face poset.
Then the order complex of $\F(P)$ is a triangulation of a sphere,
so the poset $\F(P)\cup\{\hat0,\hat 1\}$ is Gorenstein*.
We define the \cd-index of $P$ by $\Phi_P(\cc,\dd)=\Phi_{\F(P) \cup \{\hat 0,\hat 1\}}(\cc,\dd)$.
For any cell $\sigma$ of $P$,
we write $\bar \sigma$ for the closure of $\sigma$.
For an $(n-1)$-dimensional regular CW-sphere $P$,
let $\Sigma P$ be the suspension of $P$,
in other words, $\Sigma P$ is the $n$-dimensional regular CW-sphere obtained from $P$
by attaching two $n$-dimensional cells $\tau_1$ and $\tau_2$ such that $\partial \bar \tau_1= \partial \bar \tau_2=P$.
Also, for an $(n-1)$-dimensional regular CW-ball $P$
(a regular CW-complex which is homeomorphic to an $(n-1)$-dimensional ball),
let $P'$ be the $(n-1)$-dimensional regular CW-sphere which is obtained from $P$
by adding an $(n-1)$-dimensional cell $\tau$ so that $\partial \bar \tau = \partial P$.

\begin{definition}
\label{def:S*-shellability}
Let $P$ be an $(n-1)$-dimensional regular CW-sphere.
We say that $P$ is \textit{S*-shellable} if either $P=\{\emptyset\}$
or there is an order $\sigma_1,\sigma_2,\dots,\sigma_r$ of the facets of $P$ such that
the following conditions hold.
\begin{itemize}
\item[(a)]
$\partial \bar \sigma_1$ is S*-shellable.
\item[(b)] For $1 \leq i \leq r-1$, let
$$\Omega_i=\bar \sigma_1 \cup \bar \sigma_2 \cup \dots \cup \bar \sigma_i$$
and for $2 \leq i \leq r-1$ let
$$\Gamma_i=\overline{\left[ \partial \bar \sigma_i \setminus \big(\partial \bar \sigma_i \cap \Omega_{i-1}\big) \right]}.$$
Then both $\Omega_i$ and $\Gamma_i$ are regular CW-balls of dimension $(n-1)$ and $(n-2)$ respectively,
and $\Gamma'_i$
is S*-shellable with the first facet of the shelling being the facet which is not in $\Gamma_i$.
\end{itemize}
\end{definition}

\begin{remark}\label{rem:S*-shellability}
The difference between the above definition and Stanley's S-shellability is that S-shellability only assume that
$P$ and $\Gamma'_i$ are Eulerian and assume no conditions on $\Omega_i$.
However, S*-shellable regular CW-spheres are S-shellable, and the boundary complex of convex polytopes are S*-shellable by the line shelling \cite{BM}. We leave the verification of this fact to the readers.

\end{remark}

The next recursive formula is due to Stanley \cite{St1}.

\begin{lemma}[Stanley]
\label{2-3}
With the same notation as in Definition \ref{def:S*-shellability},
for $i=1,2,\dots,r-2$, one has
$$\Phi_{\Omega_{i+1}'}(\cc,\dd) =\Phi_{\Omega_{i}'}(\cc,\dd)
+ \left\{ \Phi_{\Gamma_{i+1}'}(\cc,\dd) -\Phi_{\Sigma (\partial \Gamma_{i+1})}(\cc,\dd)
\right\} \cc
+ \Phi_{\partial \Gamma_{i+1}}(\cc,\dd)\dd.$$
\end{lemma}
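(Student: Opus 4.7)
The plan is to compare the flag $f$-vectors of the Gorenstein$^*$ posets underlying $\Omega'_{i+1}$ and $\Omega'_i$, stratify maximal chains by their interaction with the newly added cells, and then translate the resulting combinatorial identity into $\cc\dd$-indices. Let $\tau_i$ and $\tau_{i+1}$ denote the capping cells closing the balls $\Omega_i,\Omega_{i+1}$ into the spheres $\Omega'_i,\Omega'_{i+1}$. The cells of $\Omega'_{i+1}$ absent from $\Omega'_i$ are $\sigma_{i+1}$, $\tau_{i+1}$, and the subfaces of $\bar\sigma_{i+1}$ lying in $\Gamma_{i+1}\setminus\partial\Gamma_{i+1}$; conversely, $\tau_i$ is absent from $\Omega'_{i+1}$. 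The S*-shellability hypothesis guarantees that all substructures involved ($\Omega_{i+1}$, $\Gamma_{i+1}$, $\Gamma'_{i+1}$, and $\partial\Gamma_{i+1}$) are regular CW-balls or -spheres, so each $\cc\dd$-index on the right-hand side is well defined.

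First I would use condition (b) of Definition~\ref{def:S*-shellability} to replace the ``old'' boundary region $\partial\bar\sigma_{i+1}\cap\Omega_i$ by the single capping facet of $\Gamma'_{i+1}$ for the purpose of chain enumeration, so that chain-level contributions of $\partial\bar\sigma_{i+1}$ transfer to $\Gamma'_{i+1}$. Next, stratifying maximal chains of both spheres by their rank-$n$ entry, chains visiting a common facet match bijectively and cancel in the difference $\Psi_{\Omega'_{i+1}}-\Psi_{\Omega'_i}$. Chains of $\Omega'_{i+1}$ through $\sigma_{i+1}$ contribute $\Psi_{\Gamma'_{i+1}}(\aaa,\bb)\,\bb$; chains through $\tau_{i+1}$ (respectively through $\tau_i$ in $\Omega'_i$) contribute $\Psi_{\partial\Omega_{i+1}}(\aaa,\bb)\,\bb$ (respectively $\Psi_{\partial\Omega_i}(\aaa,\bb)\,\bb$).

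Then I would compare $\partial\Omega_{i+1}$ with $\partial\Omega_i$: these two $(n-2)$-spheres agree outside $\partial\bar\sigma_{i+1}$ and, within $\partial\bar\sigma_{i+1}$, differ by swapping the ``old'' piece $\partial\bar\sigma_{i+1}\cap\Omega_i$ for the ``new'' piece $\Gamma_{i+1}$, glued along the common ring $\partial\Gamma_{i+1}$. After converting each $\Psi$ back to its $\cc\dd$-index via the standard $\aaa=\frac{\cc-(\aaa-\bb)}{2}$, $\bb=\frac{\cc+(\aaa-\bb)}{2}$ substitution and invoking the suspension identity $\Phi_{\Sigma Q}(\cc,\dd)=\Phi_Q(\cc,\dd)\,\cc$, the three boundary contributions should assemble into the right-hand-side terms $\Phi_{\Gamma'_{i+1}}\cc$, $\Phi_{\partial\Gamma_{i+1}}\dd$, and the correction $-\Phi_{\Sigma(\partial\Gamma_{i+1})}\cc$.

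The main obstacle will be the double counting along the ring $\partial\Gamma_{i+1}$: the same ring appears both as the seam within $\Gamma'_{i+1}$ in the $\sigma_{i+1}$-contribution and as the seam in the comparison of $\partial\Omega_{i+1}$ with $\partial\Omega_i$. Recognizing the over-counted contribution as exactly $\Phi_{\Sigma(\partial\Gamma_{i+1})}(\cc,\dd)\,\cc$ will require observing that locally along $\partial\Gamma_{i+1}$ the attaching pair (the ``old'' capping facet of $\Gamma'_{i+1}$ and the ``new'' capping facet $\tau_{i+1}$ of $\Omega'_{i+1}$) behaves like the two top-dimensional cells of the suspension $\Sigma(\partial\Gamma_{i+1})$. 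The remaining steps are routine algebraic conversions between the $\aaa,\bb$- and $\cc,\dd$-notations, using Gorenstein$^*$-ness to guarantee that the resulting polynomial in $\aaa,\bb$ actually lies in the subalgebra generated by $\cc$ and $\dd$.
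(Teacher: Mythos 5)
The paper does not prove Lemma~\ref{2-3}; it simply cites Stanley~\cite{St1}, so there is no paper argument to compare against. Your plan---compare the flag $f$-vectors of $\Omega'_{i+1}$ and $\Omega'_i$ by bookkeeping over chains, then rewrite in $\cc,\dd$---is plausibly the approach taken in~\cite{St1}. However, several of your intermediate claims are not correct as stated and the crucial one is missing a real argument.

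First, the assertion that ``chains of $\Omega'_{i+1}$ through $\sigma_{i+1}$ contribute $\Psi_{\Gamma'_{i+1}}(\aaa,\bb)\,\bb$'' is false. A chain with $\sigma_{i+1}$ at rank $n$ has its lower elements ranging over the faces of $\partial\bar\sigma_{i+1}$; the contribution is therefore $\Psi_{\partial\bar\sigma_{i+1}}(\aaa,\bb)\,\bb$, not $\Psi_{\Gamma'_{i+1}}(\aaa,\bb)\,\bb$. Your proposed fix---``replace $\partial\bar\sigma_{i+1}\cap\Omega_i$ by the single capping facet of $\Gamma'_{i+1}$''---does not give equality of flag $f$-vectors: the ball $\partial\bar\sigma_{i+1}\cap\Omega_i$ carries many interior flags that a single cell does not, so this ``replacement'' is exactly the nontrivial content of the lemma, and cannot be taken as a free normalization. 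Second, stratifying only by the rank-$n$ entry leaves out flags $S$ with $n\notin S$. Those also change from $\Omega'_i$ to $\Omega'_{i+1}$, because $\Omega'_{i+1}$ contains new cells of intermediate rank (the interior cells of $\Gamma_{i+1}$), so the claimed cancellation of ``common'' chains does not dispose of them. Third, the identification of the overcount along $\partial\Gamma_{i+1}$ with $\Phi_{\Sigma(\partial\Gamma_{i+1})}(\cc,\dd)\cc$ is offered as an analogy (``behaves like the two top cells of the suspension'') rather than a computation; this is precisely where the signs and the correction term must actually be derived. In short, the framework is the right one, but the three contributions you list are not justified and at least one is misattributed, so the sketch does not yet constitute a proof.
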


Since $\Omega_{r-1}'=P$ the above formula gives a way to compute the \cd-index of $P$ recursively.

Next, we recall a result of Ehrenborg and Karu proving that
the \cd-index increases by taking subdivisions.
Let $P$ and $Q$ be regular CW-complexes,
and let $\phi: \F(P) \to \F(Q)$ be a poset map.
For a subcomplex $Q'= \sigma_1\cup \cdots \cup \sigma_s \subset Q$,
where each $\sigma_i$ is a cell of $Q$,
we write $\phi^{-1}(Q')=\phi^{-1}(\sigma_1) \cup \cdots \cup \phi^{-1}(\sigma_s)$.

Following \cite[Definition 2.6]{EK},
for $(n-1)$-dimensional regular CW-spheres $P$ and $\hat P$,
we say that $\hat P$ is a subdivision of $P$ if there is an order preserving surjective poset map $\phi: \F (\hat P) \to \F (P)$,
satisfying that
for any cell $\sigma$ of $P$, $\phi^{-1}(\bar \sigma)$ is a homology ball having the same dimension as $\sigma$
and $\phi^{-1}(\partial \bar \sigma) = \partial(\phi^{-1}(\bar \sigma))$.


The following result was proved in \cite[Theorem 1.5]{EK}.

\begin{lemma}[Ehrenborg-Karu]
\label{2-4}
Let $P$ and $\hat P$ be $(n-1)$-dimensional regular CW-spheres.
If $\hat P$ is a subdivision of $P$ then one has a coefficientwise inequality $\Phi_{\hat P}(\cc,\dd) \geq \Phi_P(\cc,\dd)$
\end{lemma}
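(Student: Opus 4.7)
The plan is to establish the coefficient-wise inequality via a local-to-global decomposition. For each cell of $P$ I would associate a ``local'' $\cd$-polynomial measuring the combinatorial effect of the subdivision on that cell, then show the global difference $\Phi_{\hat P}-\Phi_P$ decomposes as a sum of products of these local polynomials with $\cd$-indices of links, each factor being coefficient-wise non-negative.

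First I would set up the local data. For each cell $\sigma\in\F(P)$ the preimage $B_\sigma:=\phi^{-1}(\bar\sigma)$ is a homology ball of the same dimension as $\sigma$ which subdivides $\bar\sigma$, with $\partial B_\sigma=\phi^{-1}(\partial\bar\sigma)$. I would group flags of $\F(\hat P)\cup\{\hat 0,\hat 1\}$ according to the multichain in $\F(P)\cup\{\hat 0,\hat 1\}$ obtained by pushing them forward through $\phi$, and use this to rewrite the flag polynomial $\Psi_{\hat P}(\aaa,\bb)$ as a sum over multichains in $P$. Converting back via $\cc=\aaa+\bb$ and $\dd=\aaa\bb+\bb\aaa$, the target is a decomposition
$$\Phi_{\hat P}(\cc,\dd)-\Phi_P(\cc,\dd)=\sum_{\emptyset\ne\sigma\in\F(P)}\ell_\sigma(\cc,\dd)\cdot \Phi_{\mathrm{lk}_P(\sigma)}(\cc,\dd),$$
where $\mathrm{lk}_P(\sigma)$ is the CW-link of $\sigma$ in $P$ (itself a regular CW-sphere of dimension $n-2-\dim\sigma$) and $\ell_\sigma$ is a universal local $\cd$-polynomial depending only on the pair $(B_\sigma,\bar\sigma)$.

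Next I would prove $\ell_\sigma\ge 0$ coefficient-wise by induction on $n$. Karu's theorem on Gorenstein$^*$ posets already gives $\Phi_{\mathrm{lk}_P(\sigma)}\ge 0$, so the product structure would then imply the desired inequality for the full sphere. For the induction, capping $B_\sigma$ off along its boundary by $\bar\sigma$ itself produces a CW-sphere that is a subdivision of the trivial sphere $(\bar\sigma)'$; in dimension strictly less than $n$ the induction hypothesis gives non-negativity for that smaller subdivision, and comparing this with the local decomposition applied to the smaller subdivision identifies $\ell_\sigma$ with a sum of non-negative contributions.

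The main obstacle is extracting a clean local $\cd$-polynomial $\ell_\sigma$ from the flag $f$-vector data, because the multichain refinement mixes $\aaa\bb$-monomials in a way that does not transparently respect the $\cd$-form: one must argue that after the alternating-sum passage to the flag $h$-vector and the change of basis to $\cc,\dd$, only the claimed products survive. I expect this step genuinely needs the sheaf-theoretic framework Karu developed, namely $\cd$-sheaves on the face lattice admitting a pure decomposition, rather than a purely combinatorial manipulation. Once that framework is in place, the induction on dimension together with Karu's non-negativity theorem closes the argument.
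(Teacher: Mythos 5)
The paper does not prove this lemma; it is quoted verbatim from Ehrenborg and Karu \cite[Theorem 1.5]{EK}, so there is no in-paper argument to compare against. Your sketch is a fair high-level reconstruction of the Ehrenborg--Karu strategy: they prove a decomposition theorem expressing $\Phi_{\hat P}$ as a sum, over $\sigma$ in the face poset of $P$ together with $\hat 0$, of a local $\cd$-index $\ell_{\hat P}(\sigma)$ times $\Phi_{[\sigma,\hat 1]}$, with the $\sigma=\hat 0$ term recovering $\Phi_P$ (so $\Phi_{\hat P}-\Phi_P$ is the remaining sum, exactly as you wrote), and then show each $\ell_{\hat P}(\sigma)\ge 0$ coefficientwise by a sheaf-theoretic argument in the spirit of Karu's non-negativity theorem.

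That said, the core of your proposal is declared rather than carried out. The assertion that only the claimed products ``survive'' the passage from the $\aaa\bb$-expansion through the flag $h$-vector to the $\cc\dd$-basis is precisely what requires proof, and the inductive non-negativity of $\ell_\sigma$ by ``capping off'' is likewise asserted without specifying how the local $\cd$-index is defined, how it is isolated from the cap, or why the cap produces a valid subdivision input for the induction. You correctly flag that these steps need the full machinery of $\cd$-sheaves and their pure decompositions; since that machinery is the entire substance of \cite{EK}, the proposal is best read as identifying the right reference and strategy rather than as a self-contained proof. For the purposes of the present paper, which only uses the statement as a black box, this is acceptable.
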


Back to S*-shellable regular CW-spheres,
with the same notation as in Definition \ref{def:S*-shellability},
$\Omega_i'$ is a subdivision of $\Sigma(\partial \Omega_i)$ and
$\partial \Omega_i$ is a subdivision of $\Sigma (\partial \Gamma_{i+1})$.
Indeed, for the first statement, if $\tau_1$ and $\tau_2$ are the facets of $\Sigma(\partial \Omega_i)$
then define $\phi:\F(\Omega_i') \to \F (\Sigma(\partial \Omega_i))$ by
\begin{eqnarray*}
\phi(\sigma)=
\left\{
\begin{array}{llll}
\sigma,& \mbox{ if } \sigma \in \partial \Omega_i,\\
\tau_1, & \mbox{ if } \sigma \mbox{ is an interior face of } \Omega_i,\\
\tau_2, & \mbox{ if } \sigma \not \in \Omega_i.
\end{array}
\right.
\end{eqnarray*}
Similarly, for the second statement, if $\tau_1$ and $\tau_2$ are the facets of $\Sigma(\partial \Gamma_{i+1})$
then define $\phi: \F(\partial \Omega_i) \to \F (\Sigma (\partial \Gamma_{i+1}))$ by
\begin{eqnarray*}
\phi(\sigma)=
\left\{
\begin{array}{llll}
\sigma,& \mbox{ if } \sigma \in \partial \Gamma_{i+1},\\
\tau_1, & \mbox{ if } \sigma \in \bar \sigma_{i+1}\backslash \partial \Gamma_{i+1},\\
\tau_2, & \mbox{ otherwise.}
\end{array}
\right.
\end{eqnarray*}
Since $\Phi_{\Sigma P}(\cc,\dd)=\Phi_P(\cc,\dd) \cc$ for any regular CW-sphere $P$
(see \cite[Lemma 1.1]{St1}), Lemma \ref{2-4} shows

\begin{lemma}
\label{2-5}
With the same notation as in Definition \ref{def:S*-shellability}, for $i=2,3,\dots,r-2$, one has
$\Phi_{\Omega_i'}(\cc,\dd) \geq \Phi_{\partial \Gamma_{i+1}}(\cc,\dd) \cc^2$.
\end{lemma}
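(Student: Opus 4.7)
The plan is to chain the two subdivision relations noted in the paragraph immediately preceding the statement with the Ehrenborg--Karu inequality (Lemma \ref{2-4}) and the suspension identity $\Phi_{\Sigma P}(\cc,\dd)=\Phi_P(\cc,\dd)\cc$ from \cite[Lemma 1.1]{St1}.

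First I would apply Lemma \ref{2-4} to the subdivision map $\phi:\F(\Omega_i')\to \F(\Sigma(\partial \Omega_i))$ displayed just above; this yields the coefficientwise inequality
$$\Phi_{\Omega_i'}(\cc,\dd) \geq \Phi_{\Sigma(\partial \Omega_i)}(\cc,\dd) = \Phi_{\partial \Omega_i}(\cc,\dd)\,\cc.$$
Next, Lemma \ref{2-4} applied to the second subdivision $\phi:\F(\partial \Omega_i)\to \F(\Sigma(\partial \Gamma_{i+1}))$ described above gives
$$\Phi_{\partial \Omega_i}(\cc,\dd) \geq \Phi_{\Sigma(\partial \Gamma_{i+1})}(\cc,\dd) = \Phi_{\partial \Gamma_{i+1}}(\cc,\dd)\,\cc.$$

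To combine these I would right-multiply the second inequality by $\cc$; this preserves the coefficientwise order because, by the Stanley--Karu theorem recalled in Section \ref{sec:2}, the $\cd$-indices in question have non-negative integer coefficients, so right-multiplication by $\cc$ only produces further non-negative shifts of monomials on each side. Chaining the two inequalities then produces
$$\Phi_{\Omega_i'}(\cc,\dd) \;\geq\; \Phi_{\partial \Omega_i}(\cc,\dd)\,\cc \;\geq\; \Phi_{\partial \Gamma_{i+1}}(\cc,\dd)\,\cc^2,$$
which is the claimed bound.

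I do not anticipate a real obstacle: the two poset maps needed are written down explicitly before the statement (collapsing, respectively, the interior of $\Omega_i$ and of $\bar\sigma_{i+1}\setminus \partial\Gamma_{i+1}$ onto one suspension pole, and the complement onto the other), and the only thing that would need verification is that they fit the Ehrenborg--Karu definition of a subdivision, namely that each fiber $\phi^{-1}(\bar\tau)$ is a homology ball of the correct dimension whose boundary is $\phi^{-1}(\partial\bar\tau)$. For the constant-valued fibers this is exactly the S*-shellability hypothesis, since Definition \ref{def:S*-shellability} guarantees that $\Omega_i$ and $\Gamma_{i+1}$ are regular CW-balls of the right dimensions; the fibers over cells of $\partial\Omega_i$ (respectively $\partial\Gamma_{i+1}$) are single cells, so those conditions are automatic. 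The index range $2\leq i\leq r-2$ in the statement is precisely what is needed for both $\Gamma_{i+1}$ and the subdivision description to make sense.
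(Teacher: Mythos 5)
Your proposal matches the paper's own argument: the paper sets up exactly the two subdivision maps you cite, invokes Lemma \ref{2-4} for each, and then uses $\Phi_{\Sigma Q}=\Phi_Q\cc$ to chain the inequalities, giving $\Phi_{\Omega_i'}\geq\Phi_{\partial\Omega_i}\cc\geq\Phi_{\partial\Gamma_{i+1}}\cc^2$. One small over-complication: to right-multiply a coefficientwise inequality by $\cc$ you do not need the Stanley--Karu nonnegativity of the $\cd$-indices themselves --- if $f-g$ has nonnegative coefficients then so does $(f-g)\cc$ for any $\cc$ with nonnegative coefficients, since right-multiplication by a monomial just re-indexes monomials; the cited theorem is not actually needed for that step.
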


\section{Proof of the main theorem}\label{sec:3}

In this section, we prove Theorem \ref{main}.

For a homogeneous $\cd$-polynomial $\Phi$ (i.e., homogeneous polynomial of $\ZZ\langle \cc,\dd\rangle$ with $\deg \cc=1$ and $\deg \dd =2$) of degree $n$,
we define $\Phi_0,\Phi_2,\dots,\Phi_n$ by
$$
\Phi=\Phi_0+ \Phi_2 \dd \cc^{n-2} + \Phi_3\dd\cc^{n-3} + \cdots + \Phi_{n-1} \dd\cc+ \Phi_n \dd
$$
where $\Phi_0=\alpha \cc^n$ for some $\alpha \in \ZZ$ and each $\Phi_k$ is a $\cc\dd$-polynomial of degree $k-2$ for $k \geq 2$.
Also, we write $\Phi_{\leq k}= \Phi_0+ \Phi_2 \dd \cc^{n-2} + \cdots +\Phi_k\dd\cc^{n-k}$.

\begin{definition}\label{def:cd-k-FFK}\
\begin{itemize}
\item
A vector $(\delta_0,\delta_1,\dots,\delta_s) \in \ZZ^{s+1}$ is said to be \textit{$k$-FFK}
if there is a $k$-colored simplicial complex $\Delta$ such that $\delta_i=f_{i-1}(\Delta)$
for $i=0,1,\dots,s$. ($\{\emptyset\}$ is a $0$-colored simplicial complex.)
A homogeneous $\cc \dd$-polynomial $\Phi=\Phi(\cc,\dd)$ is said to be $k$-FFK
if, when we write $\Phi(1,\dd)=\delta_0 + \delta_1 \dd+ \cdots + \delta_s \dd^s$,
the vector $(\delta_0,\delta_1,\dots,\delta_s)$ is $k$-FFK.
\item
A homogeneous $\cc\dd$-polynomial $\Phi$ of degree $n$ is said to be \textit{primitive}
if the coefficient of $\cc^n$ in $\Phi$ is $1$.
\item
Let $\Phi$ be a homogeneous $\cc\dd$-polynomial.
A primitive homogeneous $\cc\dd$-polynomial $\Psi$ is said to be \textit{$k$-good for $\Phi$}
if $\Psi$ is $k$-FFK and $\Phi(1,\dd) \geq \Psi(1,\dd)$.
Also, we say that a homogeneous $\cc\dd$-polynomial $\Psi$ is $k$-good for $\Phi$
if it is the sum of primitive homogeneous $\cc\dd$-polynomials that are $k$-good for $\Phi$.
\end{itemize}
\end{definition}

We will use the following observation, which follows from \cite[Lemma 3.1]{Nevo-Petersen-Tenner}:
\begin{lemma}\label{rem:k-FFK}\
If $\Phi$ is a $k$-FFK homogeneous $\cc\dd$-polynomial of degree $n$,
and if $\Psi'$ and $\Psi''$ are homogeneous $\cc\dd$-polynomials of degree $n'$ and $n''$ respectively,
where $n',n'' \leq n-2$,
which are $k$-good for $\Phi$ then
$$\Phi+ \Psi' \dd\cc^{n-n'-2}\ \rm{and} \ \  \Phi+ \Psi' \dd\cc^{n-n'-2}+\Psi'' \dd\cc^{n-n''-2}$$
are $(k+1)$-FFK.
\end{lemma}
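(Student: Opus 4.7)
The plan is to recast the algebraic claim as an $f$-vector identity and then exhibit a $(k+1)$-colored simplicial complex realizing it. First I substitute $\cc=1$: writing $\Phi(1,\dd)=\sum_i\delta_i\dd^i$, $\Psi'(1,\dd)=\sum_i\epsilon'_i\dd^i$, and $\Psi''(1,\dd)=\sum_i\epsilon''_i\dd^i$ (with the convention $\epsilon'_{-1}=\epsilon''_{-1}=0$), the two polynomials in the statement evaluate to $\sum_i(\delta_i+\epsilon'_{i-1})\dd^i$ and $\sum_i(\delta_i+\epsilon'_{i-1}+\epsilon''_{i-1})\dd^i$ respectively. So the task reduces to producing $(k+1)$-colored complexes whose $f$-vectors are these sums.

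For the construction, decompose $\Psi'=\sum_\ell \Psi'_\ell$ and $\Psi''=\sum_\ell \Psi''_\ell$ into their primitive $k$-good summands. Using the inequalities $\Phi(1,\dd)\geq \Psi'_\ell(1,\dd)$ and $\Phi(1,\dd)\geq \Psi''_\ell(1,\dd)$ and appealing to \cite[Lemma~3.1]{Nevo-Petersen-Tenner}, I choose a $k$-colored complex $\Delta$ realizing $\Phi$, on vertex set $V$ with coloring $c\colon V\to[k]$, together with color-preserving subcomplexes $\Delta_\Psi\subseteq\Delta$ realizing each primitive summand $\Psi$. For every such $\Psi$ introduce a fresh vertex $v_\Psi$ of color $k+1$, and form
$$\tilde\Delta \;=\; \Delta \;\cup\; \bigcup_{\Psi}\, v_\Psi * \Delta_\Psi,$$
where the union ranges over the primitive summands of $\Psi'$ for the first claim, and over those of $\Psi'$ and $\Psi''$ together for the second.

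Two checks remain. First, $\tilde\Delta$ is properly $(k+1)$-colored: edges internal to $\Delta$ are already properly $k$-colored, each new edge is of the form $\{v_\Psi,w\}$ with $w\in V(\Delta_\Psi)$ and $c(w)\in[k]\not\ni k+1=c(v_\Psi)$, and no two distinct apices $v_\Psi,v_{\Psi'}$ span an edge since no face of the complex contains both. Second, for $i\geq 1$ the size-$i$ faces of $\tilde\Delta$ split into the size-$i$ faces of $\Delta$ (there are $\delta_i$ of them) and those of the form $\{v_\Psi\}\cup F$ for a unique $\Psi$ and some $F\in\Delta_\Psi$ of size $i-1$ (contributing $\sum_\Psi f_{i-2}(\Delta_\Psi)$, which equals $\epsilon'_{i-1}$ or $\epsilon'_{i-1}+\epsilon''_{i-1}$ according to the case). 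The $i=0$ case is automatic from $f_{-1}(\tilde\Delta)=1=\delta_0$.

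The main obstacle is producing the color-preserving subcomplex embeddings $\Delta_\Psi\subseteq\Delta$: componentwise dominance of $f$-vectors alone does not yield such an embedding, since one also needs per-color control on vertices and higher-dimensional compatibility. Overcoming this requires canonical (e.g.\ color-compressed) realizations of $k$-FFK vectors that are nested under componentwise dominance, which is precisely what \cite[Lemma~3.1]{Nevo-Petersen-Tenner} supplies. Once that structural input is in hand, the cone-and-glue construction above is routine, and the two desired $(k+1)$-FFK statements follow simultaneously from the single complex $\tilde\Delta$.
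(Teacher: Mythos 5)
Your proposal is correct and follows essentially the same route as the paper: reduce to $f$-vector bookkeeping, invoke the Frankl--F\"uredi--Kalai compressed/canonical realizations (via \cite[Lemma~3.1]{Nevo-Petersen-Tenner}) to obtain a $k$-colored complex $\Delta$ realizing $\Phi$ together with nested subcomplexes realizing the primitive summands of $\Psi'$ (and $\Psi''$), cone each subcomplex over a fresh color-$(k+1)$ apex, and take the union. This matches the paper's construction $\Gamma^{(i)}=\Delta\cup\bigcup_k\{F\cup\{v_k\}:F\in\Delta^{(k)}\}$ and its coloring argument.
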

\begin{proof}

By Frankl-F{\"u}redi-Kalai theorem \cite{FFK},
for any $k$-colored simplicial complex $\Gamma$, there is the unique $k$-colored simplicial complex $\mathcal{C}(\Gamma)$, called a $k$-colored compressed complex, such that $f_i(\Gamma)=f_i(\mathcal{C}(\Gamma))$ for all $i$.
Moreover,
if $\Gamma'$ is a $k$-colored complex satisfying $f_i(\Gamma) \leq f_i(\Gamma')$ for all $i$, then one has
$\mathcal{C}(\Gamma) \subset \mathcal{C}(\Gamma')$.

For a simplicial complex $\Gamma$, we write $f(\Gamma,\dd)=1+f_0(\Gamma)\dd+f_1(\Gamma)\dd^2+ \cdots$.
There are $k$-colored complexes $\Delta, \Delta^{(1)},\cdots,\Delta^{(m)},\cdots,\Delta^{(s)}$ such that
$f(\Delta, \dd)=\Phi(1,\dd)$, $\sum_{1\leq i\leq m}f(\Delta^{(i)}, \dd)=\Psi'(1,\dd)$,
$\sum_{m+1\leq i\leq s}f(\Delta^{(i)}, \dd)=\Psi''(1,\dd)$
and each $\Delta^{(i)}$ is a subcomplex of $\Delta$.
Let
$$\Gamma^{(i)}= \Delta \bigcup \left\{\bigcup_{k=1}^i \{ F \cup \{v_k\}: F \in \Delta^{(k)}\}\right\},$$
where $v_1,\dots,v_s$ are new vertices.
Since each $\Delta^{(k)}$ is a subcomplex of $\Delta$,
$\Gamma^{(i)}$ is a simplicial complex.
Also,
$f(\Gamma^{(m)},\dd)=(\Phi+ \Psi' \dd\cc^{n-n'-2})(1,\dd)$ and
$f(\Gamma^{(s)},\dd)=(\Phi+ \Psi' \dd\cc^{n-n'-2} + \Psi'' \dd\cc^{n-n''-2})(1,\dd)$.
We claim that each $\Gamma^{(i)}$ is $(k+1)$-colored.
Let $V$ be the vertex set of $\Delta$ and $c:V\rightarrow [k]$ a $k$-coloring map of $\Delta$.
Then the map $\hat c : V \cup\{v_1,\dots,v_i\} \to [k+1]$ defined
by $\hat c(x)=c(x)$ if $x \in V$ and $\hat c(x) = k+1$ if $x \not \in V$ is a $(k+1)$-coloring map of $\Gamma^{(i)}$.
\end{proof}

Let $P$ be an $(n-1)$-dimensional $S$*-shellable regular CW-sphere with the shelling $\sigma_1,\dots,\sigma_r$.
Keeping the notation in Definition \ref{def:S*-shellability},
to simplify notations, we use the following symbols.
\begin{eqnarray*}
\Phi^{(i)}&=& \Phi^{(i)}(\cc,\dd)= \Phi_{\Omega_i'}(\cc,\dd)\\
\Phi &=& \Phi_P(\cc,\dd)=\Phi^{(r-1)}\\
\Psi^{(i)} &=& \Phi_{\Gamma_{i+1}'}(\cc,\dd)-\Phi_{\Sigma(\partial \Gamma_{i+1})}(\cc,\dd)\\
\Psi &=& \sum_{i=1}^{r-2} \Psi^{(i)}\\
\Pi &=& \Phi-\Phi^{(1)}.
\end{eqnarray*}
Thus Stanley's recursive formula, Lemma \ref{2-3}, says
$$\Phi^{(i+1)}= \Phi^{(i)}+ \Psi^{(i)} \cc + \Phi_{\partial \Gamma_{i+1}}\dd$$
and
$$\Pi= \Psi\cc+ \sum_{i=1}^{r-2} \Phi_{\partial \Gamma_{i+1}}(\cc,\dd) \dd.$$

The last part of the following proposition is a restatement of Theorem \ref{main}.
\begin{proposition}\label{prop:main}
With notation as above, the following holds.
\begin{itemize}
\item[(1)] For $2\leq k\leq n$, $\Psi^{(i)}_k$ is $\lfloor \frac k 2 -1\rfloor$-good for $\Phi_{\leq k-2}^{(i)} + \Psi^{(i)}_{\leq k-2} \cc$.
\item[(2)] For $2\leq k\leq n$, $\Pi_k$ is $\lfloor \frac k 2 -1\rfloor$-good for $\Phi_{\leq k-2}^{(1)} + \Pi_{\leq k-2}$.
\item[(3)] For $2\leq k\leq n$, $\Phi_k$ is $\lfloor \frac k 2 -1\rfloor$-good for $\Phi_{\leq k-2}$.
\item[(4)] For $0\leq k\leq n$, $\Phi_{\leq k}$ is $\lfloor \frac k 2\rfloor$-FFK.
In particular, the $\cc\dd$-index of $P$ is $\lfloor \frac n 2\rfloor$-FFK.
\end{itemize}
\end{proposition}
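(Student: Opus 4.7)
I would prove the four parts simultaneously by induction on $n = \dim P + 1$, with trivial base case $P = \{\emptyset\}$ where $\Phi = 1$ is $0$-FFK. Within the inductive step I would prove them in the stated order (1), (2), (3), (4), since (1) feeds (2), (2) feeds (3), (3) feeds (4), and the inductive hypothesis at smaller $n$ supplies the seeds for (1), (2), (3). All three auxiliary CW-spheres that will appear are themselves S*-shellable: $\partial \bar\sigma_1$ by condition (a) of $P$'s shelling; $\Gamma'_{i+1}$ by condition (b); and $\partial \Gamma_{i+1}$ by applying condition (a) within the shelling of $\Gamma'_{i+1}$ (whose first facet has boundary $\partial \Gamma_{i+1}$).

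The observation that makes (1) accessible is the identity $\Psi^{(i)} = \Pi_{\Gamma'_{i+1}}$. Indeed, the shelling of $\Gamma'_{i+1}$ begins with the added facet $\tau$ whose boundary is $\partial \Gamma_{i+1}$; the first step of Stanley's recursion applied to $\Gamma'_{i+1}$ therefore yields $\Phi^{(1)}_{\Gamma'_{i+1}} = \Phi_{\partial \bar \tau} \cc = \Phi_{\Sigma(\partial \Gamma_{i+1})}$, whence $\Pi_{\Gamma'_{i+1}} = \Phi_{\Gamma'_{i+1}} - \Phi_{\Sigma(\partial \Gamma_{i+1})} = \Psi^{(i)}$. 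Since $\dim \Gamma'_{i+1} = n - 2$, the inductive hypothesis (part (2)) gives that $\Psi^{(i)}_k$ is $\lfloor k/2 - 1 \rfloor$-good for $\Phi_{\Gamma'_{i+1}, \leq k-2}$. I would then enlarge this target to $\Phi^{(i)}_{\leq k-2} + \Psi^{(i)}_{\leq k-2} \cc$ by a direct coefficient-wise comparison at $\cc = 1$: the $\Psi^{(i)}_{\leq k-2}$ pieces cancel, reducing the matter to the inequality $(\Phi_{\partial \Gamma_{i+1}} \cc)_{\leq k-2}(1, \dd) \leq \Phi^{(i)}_{\leq k-2}(1, \dd)$, which follows from Lemma \ref{2-5} by truncation.

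Part (2) then comes by summing over $i$: for $k \leq n-1$ one has $\Pi_k = \sum_i \Psi^{(i)}_k$, and each $\Psi^{(i)}_k$ is good for a polynomial bounded at $\cc = 1$ by $\Phi_{\leq k-2}(1, \dd) = (\Phi^{(1)}_{\leq k-2} + \Pi_{\leq k-2})(1, \dd)$, so the sum is good for the larger target. For $k = n$ one additionally gets the contributions $\sum_i \Phi_{\partial \Gamma_{i+1}}$ from the $\Phi_{\partial \Gamma_{i+1}} \dd$ summands in Lemma \ref{2-3}; each $\Phi_{\partial \Gamma_{i+1}}$ is a primitive $\cd$-index of an S*-shellable $(n-3)$-sphere, so the inductive hypothesis (part (4)) makes it $\lfloor n/2 - 1 \rfloor$-FFK, and Lemma \ref{2-5} again supplies the required target bound. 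Part (3) follows by writing $\Phi_k = (\Phi_{\partial \bar \sigma_1})_k + \Pi_k$, applying the inductive hypothesis (part (3)) to $\partial \bar \sigma_1$ for the first summand (whose target $(\Phi_{\partial \bar \sigma_1})_{\leq k-2}$ agrees at $\cc = 1$ with $\Phi^{(1)}_{\leq k-2}$, hence is bounded by $\Phi_{\leq k-2}$), and (2) for the second. Finally, (4) is an induction on $k$: writing $\Phi_{\leq k} = \Phi_{\leq k-2} + \Phi_k \dd \cc^{n-k}$, the inductive hypothesis in $k$ makes $\Phi_{\leq k-2}$ be $\lfloor (k-2)/2 \rfloor$-FFK, while (3) makes $\Phi_k$ be $(\lfloor k/2 \rfloor - 1)$-good for $\Phi_{\leq k-2}$; Lemma \ref{rem:k-FFK} then combines these into the $\lfloor k/2 \rfloor$-FFK-ness of $\Phi_{\leq k}$.

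The main obstacle I anticipate is the bookkeeping of the various ``$k$-good for'' targets and the verification of the coefficient-wise inequalities at $\cc = 1$ needed to enlarge, combine, or shift them between parts (1)--(4) and across the nested inductions on $n$, on the shelling index $i$, and on $k$. Every individual comparison reduces to Lemma \ref{2-5} together with non-negativity of the coefficients of $\cd$-indices of regular CW-spheres, but making sure that all the targets line up consistently is where the care lies.
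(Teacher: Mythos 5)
Your proposal reconstructs the paper's argument essentially verbatim: the same simultaneous induction on dimension with the parts proved in the order (1)--(4), the same key identification $\Psi^{(i)}=\Pi_{\Gamma'_{i+1}}$ coming from the first-facet condition in the S*-shelling of $\Gamma'_{i+1}$, the same use of Lemma~\ref{2-5} to enlarge the ``good-for'' targets, the same split of $\Pi_k$ into $\sum_i\Psi^{(i)}_k$ for $k<n$ and $\sum_i\Phi_{\partial\Gamma_{i+1}}$ for $k=n$, and the same appeal to Lemma~\ref{rem:k-FFK} in (4). The only cosmetic divergence is in (4): you run a step-two induction on $k$ (separately through even and odd values) and invoke only the one-$\Psi'$ case of Lemma~\ref{rem:k-FFK}, whereas the paper proceeds from $\Phi_{\leq 2m+1}$ and uses the two-$\Psi$ version of the lemma to handle $\Phi_{\leq 2m+2}$ and $\Phi_{\leq 2m+3}$ in one stroke; both are valid since $\lfloor(k-2)/2\rfloor+1=\lfloor k/2\rfloor$, and the paper's formulation of the lemma with two summands is in fact only exploited there.
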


\begin{proof}
The proof is by induction on dimension, where all statements clearly hold for $n=0,1$.
Suppose that all statements are true up to dimension $n-2$.
To simplify notations, for a regular CW-sphere $Q$,
we write $\Phi_Q=\Phi_Q(\cc,\dd)$.

\emph{Proof of} (1).
By applying the induction hypothesis to $\Gamma_{i+1}'$ (use statement(2)),
each $\Psi_k^{(i)}$ is $\lfloor \frac k 2 -1\rfloor$-good for $(\Phi_{\Sigma(\partial \Gamma_{i+1})})_{\leq k-2}^{(i)} + \Psi^{(i)}_{\leq k-2}$.
Since $(\Upsilon \cc)_k = \Upsilon_k$ for any homogeneous $\cc\dd$-polynomial $\Upsilon$,
$\Psi_k^{(i)}$ is $\lfloor \frac k 2 -1\rfloor$-good for $(\Phi_{\Sigma(\partial \Gamma_{i+1})})_{\leq k-2}^{(i)} \cc + \Psi^{(i)}_{\leq k-2}\cc$.
By Lemma 2.5,
$$\Phi_{\Sigma(\partial \Gamma_{i+1})}\cc = \Phi_{\partial \Gamma_{i+1}}\cc^2
\leq \Phi_{\Omega_{i}'}= \Phi^{(i)},$$
thus
$\Psi^{(i)}_k$ is $\lfloor \frac k 2 -1\rfloor$-good for $\Phi_{\leq k-2}^{(i)} + \Psi^{(i)}_{\leq k-2} \cc$.

\emph{Proof of} (2).
By the definition of $\Pi$,
$$\Pi_k= \sum_{i=1}^{r-1} \Psi^{(i)}_k \mbox{ for }k <n$$
and
$$\Pi_n= \sum_{i=1}^{r-2} \Phi_{\partial \Gamma_{i+1}}.$$
By (1), each $\Psi_k^{(i)}$ is
$\lfloor \frac k 2 -1\rfloor$-good for $\Phi_{\leq k-2}^{(i)} + \Psi^{(i)}_{\leq k-2} \cc$.
Then since
$$\Phi_{\leq k-2}^{(i)} + \Psi^{(i)}_{\leq k-2} \cc \leq \Phi_{\leq k-2} = \Phi_{\leq k-2}^{(1)} + \Pi_{\leq k-2},$$
$\Pi_k$ is $\lfloor \frac k 2 -1\rfloor$-good for $\Phi_{\leq k-2}^{(1)} + \Pi_{\leq k-2}$
for $k <n$.
Also,
each $\Phi_{\partial \Gamma_{i+1}}$ is $\lfloor \frac n 2 -1\rfloor$-FFK
by the induction hypothesis (use (4)),
and $\Phi_{\partial \Gamma_{i+1}} \cc^2 \leq \Phi^{(i)}$ by Lemma 2.5.
The latter condition clearly says
$$\Phi_{\partial \Gamma_{i+1}} \cc^2 \leq \Phi_{\leq n-2}^{(i)} \leq \Phi_{\leq n-2}
= \Phi^{(1)}_{\leq n-2} + \Pi_{\leq n-2}.$$
Hence $\Pi_n$ is $\lfloor \frac n 2 -1\rfloor$-good for $\Phi_{\leq n-2}^{(1)} + \Pi_{\leq n-2}$.

\emph{Proof of} (3).
Observe that since $\Phi^{(1)}= \Phi_{\partial \bar \sigma_1}\cc$,
$$\Phi_k = \Phi_k^{(1)} + \Psi_k \mbox{ for }k<n$$
and
$$\Phi_n = \Pi_n.$$
We already proved that $\Phi_n=\Pi_n$ is $\lfloor \frac n 2 -1\rfloor$-good for $\Phi_{\leq n-2}$ in the proof of (2).
Suppose $k<n$.
Since $\Phi^{(1)}=\Phi_{\partial \bar \sigma_1} \cc$,
by the induction hypothesis (use (3)),
$\Phi_k^{(1)}$ is $\lfloor \frac k 2 -1\rfloor$-good for $\Phi_{\leq k-2}^{(1)}$.
Since $\Phi^{(1)}_{\leq k-2} \leq \Phi_{\leq k-2}$
and since we already proved that $\Psi_k=\Pi_k$ is $\lfloor \frac k 2 -1\rfloor$-good for $\Phi_{\leq k-2}$ in the proof of (2),
$\Phi_k$ is $\lfloor \frac k 2 -1\rfloor$-good for $\Phi_{\leq k-2}$.

\emph{Proof of} (4).
This statement easily follows from (3).
For $k=0,1$, the statement is obvious (as $\Phi_{\leq 0}=\Phi_{\leq 1}=\cc^n$).
Suppose that $\Phi_{\leq 2m+1}$ is $m$-FFK, where $m \in \ZZ_{\geq 0}$.
Then both $\Phi_{2m+2}$ and $\Phi_{2m+3}$ are $m$-good for $\Phi_{\leq 2m+1}$ by (3),
and therefore $\Phi_{\leq 2m+2}$ and $\Phi_{\leq 2m+3}$ are $(m+1)$-FFK
by Lemma \ref{rem:k-FFK}.
\end{proof}
\section{$\gamma$-vectors of polytopes and a conjecture on the \cd-index}\label{sec:4}

\subsection*{$\gamma$-vectors and the \cd-index}

Let $\Delta$ be an $(n-1)$-dimensional simplicial complex.
Then the $h$-vector $h(\Delta)=(h_0,h_1,\dots,h_n)$ of $\Delta$ is defined by the relation
$$\sum_{i=0}^n h_ix^{n-i} = \sum_{i=0}^n f_{i-1}(\Delta) (x-1)^{n-i}.$$
If $\Delta$ is a simplicial sphere (that is, a triangulation of a sphere), or more generally a homology sphere, then $h_i=h_{n-i}$ for all $i$ by the Dehn-Sommerville equations,
and in this case the $\gamma$-vector $(\gamma_0,\gamma_1,\dots,\gamma_{\lfloor \frac n 2 \rfloor})$
of $\Delta$ is defined by the relation
$$\sum_{i=0}^n h_i x^{i} = \sum_{i=0}^{\lfloor \frac n 2 \rfloor} \gamma_i x^i(1+x)^{n-2i}.$$
It was conjectured by Gal \cite{Ga} that if $\Delta$ is a flag homology sphere then its $\gamma$-vector is non-negative.
Recently Nevo and Peterson \cite{NP} further conjectured that the $\gamma$-vector of a flag homology sphere is the $f$-vector of a balanced simplicial complex.
These conjectures are open in general,
the latter conjecture was verified for barycentric subdivisions of simplicial homology spheres \cite{Nevo-Petersen-Tenner},
and Gal's conjecture is known to be true for barycentric subdivisions of regular CW-spheres by the following fact, combined with Karu's result on the nonnegativity of the $\cd$-index for Gorenstien$^*$ posets:

Let $P$ be an $(n-1)$-dimensional regular CW-sphere.
The \textit{barycentric subdivision} $\mathrm{sd}(P)$ of $P$ is the order complex of $\F(P)$.
Let $(h_0,h_1,\dots,h_n)$ and $(\gamma_0,\gamma_1,\dots,\gamma_{\lfloor \frac n 2 \rfloor})$ be the $h$-vector and $\gamma$-vector of $\sd(P)$, respectively.
Then it is easy to see that $h_i= \sum_{S \subset [n],\ |S|=i} h_S(P)$.
Thus if $\Phi_P(1,\dd)= \delta_0+ \delta_1 \dd + \delta_2  \dd^2 + \cdots+ \delta_{\lfloor \frac n 2 \rfloor} \dd^{\lfloor \frac n 2 \rfloor}$, then for all $i\geq 0$,
$$\gamma_i=2^i \delta_i.$$
Since $\delta_i$ is non-negative, we conclude that $\gamma_i$ is also non-negative.

The next simple statement, combined with Theorem \ref{main}, proves Theorem \ref{second}.
\begin{lemma}
\label{4-1}
With the same notation as above,
if $(\delta_0,\delta_1,\dots,\delta_{\lfloor \frac n 2 \rfloor})$ is $k$-FFK then $(\gamma_0,\gamma_1,\dots,\gamma_{\lfloor \frac n 2 \rfloor})$ is also $k$-FFK.
\end{lemma}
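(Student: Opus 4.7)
The plan is to produce the $k$-colored complex $\Gamma$ witnessing $k$-FFKness of $(\gamma_0,\ldots,\gamma_{\lfloor n/2\rfloor})$ by an explicit vertex-doubling construction applied to the complex $\Delta$ that witnesses $k$-FFKness of $(\delta_0,\ldots,\delta_{\lfloor n/2\rfloor})$. The guiding identity is $\gamma_i=2^i \delta_i$, and since a face $F\in\Delta$ with $|F|=i$ should contribute $2^i$ faces of the same cardinality to $\Gamma$, the most natural way to achieve this is to duplicate each vertex and let each face of $\Delta$ generate all $2^{|F|}$ choices of duplicates.

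More precisely, let $V$ be the vertex set of $\Delta$ and $c\colon V\to[k]$ a $k$-coloring map. I would define $\Gamma$ on the vertex set $V\times\{1,2\}$ by declaring
\[
\Gamma=\bigl\{\{(v_1,j_1),\ldots,(v_s,j_s)\}:\{v_1,\ldots,v_s\}\in\Delta,\ j_1,\ldots,j_s\in\{1,2\}\bigr\}.
\]
The first step is to check that $\Gamma$ is a simplicial complex: a subset of a face in $\Gamma$ corresponds to a subset of the underlying face in $\Delta$ together with a restricted choice of duplicates, hence lies in $\Gamma$. The second step is to verify that the map $\hat c(v,j):=c(v)$ is a $k$-coloring of $\Gamma$; the only thing to observe is that an edge $\{(v,j),(v',j')\}\in\Gamma$ forces $\{v,v'\}\in\Delta$ with $v\ne v'$, so $c(v)\ne c(v')$, so $\hat c(v,j)\ne\hat c(v',j')$. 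In particular no edge of the form $\{(v,1),(v,2)\}$ appears, since $\{v\}$ alone produces only the two $0$-faces $\{(v,1)\}$ and $\{(v,2)\}$.

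The third and final step is the count: since each $(i-1)$-face $F$ of $\Delta$ (with $i$ vertices) gives rise to exactly $2^i$ distinct $(i-1)$-faces of $\Gamma$, and distinct faces of $\Delta$ give disjoint families of faces in $\Gamma$, we obtain
\[
f_{i-1}(\Gamma)=2^i f_{i-1}(\Delta)=2^i\delta_i=\gamma_i
\]
for $i=0,1,\ldots,\lfloor n/2\rfloor$ (with the case $i=0$ corresponding to $f_{-1}=1=\gamma_0$). Thus $\Gamma$ witnesses that $(\gamma_0,\ldots,\gamma_{\lfloor n/2\rfloor})$ is $k$-FFK.

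There is no real obstacle: the only place one could worry is whether duplicating vertices might force an increase in the number of colors, and the key point is that sharing a color between the two copies of each vertex is harmless because they never lie together in a face. Once this is observed, the construction works verbatim, and the lemma combined with Theorem \ref{main} immediately yields Theorem \ref{second}.
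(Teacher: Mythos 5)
Your proof is correct and follows exactly the same construction as the paper: the paper's complex $\hat\Delta=\{x_G\cup y_{F\setminus G}:F\in\Delta,\ G\subset F\}$ on vertices $\{x_v,y_v:v\in V\}$ is precisely your $\Gamma$ on $V\times\{1,2\}$, with the same coloring $\hat c(x_v)=\hat c(y_v)=c(v)$. The verification steps (simplicial-complex check, coloring check, and the $2^i$ count) match the paper's reasoning.
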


\begin{proof}
Let $\Delta$ be a $k$-colored simplicial complex on the vertex set $V$ with $f_{i-1}(\Delta)=\delta_i$ for all $i \geq 0$ and let $c: V\rightarrow [k]$ be a $k$-coloring map of $\Delta$.
Consider a collection of subsets of $W=\{x_v:v \in V\}\cup \{y_v: v \in V\}$
$$\hat \Delta =\{x_G \cup y_{F \setminus G}:
F \in \Delta,\ G \subset F\},$$
where $x_H=\{x_v: v\in H\}$ and $y_H=\{y_v: v\in H\}$ for any $H \subset V$.
Then $\hat \Delta$ is a simplicial complex with $f_{i-1}(\hat \Delta)=2^i f_{i-1}(\Delta)=\gamma_i$ for all $i$.
The map $\hat c : W\rightarrow [k]$, $\hat c (x_v)=\hat c (y_v) = c(v)$, shows that $\hat \Delta$ is $k$-colored.
\end{proof}

\begin{proof}[Proof of Corollary \ref{cor:gammaPolytope}]
By Theorem \ref{second}, in order to prove Corollary \ref{cor:gammaPolytope} it is enough to show that $\delta_{\lfloor \frac n 2 \rfloor}(P)>0$ where $P$ is the boundary complex of an $n$-polytope.
Billera and Ehrenborg showed that the $\cd$-index of $n$-polytopes is minimized (coefficientwise) by the $n$-simplex, denoted $\sigma^n$ \cite{BE}. Thus, it is enough to verify that
$\delta_{\lfloor \frac n 2 \rfloor}(\sigma^n)>0$. It is known that \emph{all} the $\cd$-coefficients of $\sigma^n$ are positive (e.g., by using the Ehrenborg-Readdy formula for the $\cd$-index of a pyramid over a polytope \cite[Theorem 5.2]{ER}).
\end{proof}

%

\subsection*{A conjecture on the \cd-index}
It would be natural to ask if Theorems \ref{main} and \ref{second} hold for all regular CW-spheres (or all Gorenstein* posets).
We phrase a conjecture on the the $\cd$-index, that, if true, immediately implies Theorem \ref{main}, as well as
the entire Proposition \ref{prop:main}(4).

For an arbitrary $\cd$-monomial $w=\cc^{s_0} \dd \cc^{s_{1}} \dd \cdots \dd \cc^{s_k}$ of degree $n$ (where $0\leq s_i$ for all $i$ and $s_0+\dots+s_k+2k=n$), let $F_w$ be the following subset of $[n-1]$:
$$F_w=\{s_0+1,s_0+s_1+3,s_0+s_1+s_2+5,\dots,s_0+\dots+s_{k-1}+2k-1 \}.$$
Note that $F_w$ contains no two consecutive numbers.  For example, $F_{\cc^n}=\emptyset$, $F_{\dd^{k}}=\{1,3,\dots,2k-1\}$ and
 $F_{\cc\dd^{k}}=\{2,4,\dots,2k\}$.
Let $\A$ be the set of subsets of $[n-1]$ that have no two
consecutive numbers, and let $\B$ be the set of $\cd$-monomials of degree $n$. Then $w\mapsto F_w$ is a bijection from $\B$ to $\A$ (as $k=|F_w|$ and $s_k=n-2k-s_{k-1}-\dots -s_0$ we see that the inverse map exists).


Let $\Delta$ be a $k$-colored simplicial complex with the vertex set $V$ and a $k$-coloring map $c: V \to [k]$.
For any subset $S \subset [k]$, let $f_S(\Delta)=|\{F \in \Delta: \ c(F)=S\}|$.
The vector $(f_S(\Delta): S \subset [k])$ is called the \textit{flag $f$-vector of $\Delta$}.
Note that the flag $f$-vector of a Gorenstein* poset $P$ is equal to the flag $f$-vector of $\sd(P)$
by the coloring map defined by the rank function.

\begin{definition}\label{def:cd-flag-f-vector}\
Let $\Phi=\sum_w a_w w$ be a homogeneous $\cd$-polynomial of degree $n$ with $w$ the $\cd$-monomials and $a_w\in \ZZ$.
For $S \subset [n-1]$,
we define
\begin{eqnarray*}
\alpha_S(\Phi)=
\left\{
\begin{array}{ll}
a_w, & \mbox{ if  $S=F_w$ for some $w \in \B$}\\
0, & \mbox{ if $S \not \in \A$}.
\end{array}
\right.
\end{eqnarray*}
\end{definition}


\begin{conjecture}\label{conj:flag f-vector}
Let $P$ be an $(n-1)$-dimensional regular CW-sphere (or more generally, Gorenstein* poset of rank $n+1$).
Then there exists an $(n-1)$-colored simplicial complex $\Delta$ such that $f_S(\Delta)=\alpha_S(\Phi_P)$ for all $S \subset [n-1]$.
\end{conjecture}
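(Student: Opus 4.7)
The natural first step is to establish the conjecture for S*-shellable CW-spheres by lifting the inductive argument of Proposition \ref{prop:main}, which produced the specialization $\Phi_P(1,\dd)$, to the full flag data $(\alpha_S(\Phi_P))_{S \subset [n-1]}$. The lift is guided by how the bijection $w \mapsto F_w$ interacts with right multiplication: for a $\cd$-monomial $w$ of degree smaller than $n$ one has $F_{w\cc} = F_w$ (viewed inside $[n-1]$) and $F_{w\dd} = F_w \cup \{n-1\}$. Thus right multiplication by $\cc$ preserves the $(n-1)$-colored face structure, while right multiplication by $\dd$ corresponds precisely to coning with a fresh vertex of color $n-1$---the operation already used in the proof of Lemma \ref{rem:k-FFK}.

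With this dictionary, Stanley's recursion
$$\Phi^{(i+1)} = \Phi^{(i)} + \Psi^{(i)}\cc + \Phi_{\partial\Gamma_{i+1}}\dd$$
suggests a concrete inductive construction along the shelling, starting from the inductively built complex $\Delta^{(i)}$ for $\Phi^{(i)}$. The contribution $\Phi_{\partial \Gamma_{i+1}}\dd$ is handled by selecting, inside $\Delta^{(i)}$, a distinguished copy of the $(n-3)$-colored complex realizing $\Phi_{\partial \Gamma_{i+1}}$ (made numerically available by $\Phi_{\partial \Gamma_{i+1}}\cc^2 \leq \Phi^{(i)}$ from Lemma \ref{2-5}) and coning it with a new vertex of color $n-1$, adding only the new cone faces. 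The contribution $\Psi^{(i)}\cc = \Phi_{\Gamma'_{i+1}}\cc - \Phi_{\partial \Gamma_{i+1}}\cc^2$, which is coefficientwise non-negative by Lemma \ref{2-4}, is incorporated by inductively constructing the $(n-2)$-colored complex $\Delta_1$ realizing $\Phi_{\Gamma'_{i+1}}$ together with a distinguished subcomplex $\Delta_2 \subset \Delta_1$ realizing $\Phi_{\partial \Gamma_{i+1}}\cc$, and gluing $\Delta_1$ to $\Delta^{(i)}$ along $\Delta_2$, identified with the same distinguished copy used in the previous step. A short inclusion-exclusion check on face counts then shows that the resulting complex realizes $\alpha_S(\Phi^{(i+1)})$ for every $S \subset [n-1]$.

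Two difficulties arise. First, even in the shellable case the inductive hypothesis must be substantially strengthened, now at the level of complexes rather than merely $f$-vectors, so as to carry along the distinguished subcomplexes used in the identifications; this upgrade must be compatible with the subdivision and suspension operations implicit in Lemmas \ref{2-4} and \ref{2-5}, and it represents a nontrivial flag refinement of the $k$-good bookkeeping in Proposition \ref{prop:main}. Second, and more seriously, the conjecture is posed for arbitrary regular CW-spheres and all Gorenstein$^*$ posets, where neither a shelling nor Stanley's recursion is available. This is the main obstacle: extending beyond S*-shellability seems to require a genuinely new technique, perhaps an enrichment of Karu's intersection-cohomology proof of the non-negativity of the $\cd$-index that additionally tracks the colored flag structure indexed by $\A$; absent such an idea the conjecture appears out of reach in full generality.
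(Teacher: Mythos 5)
This statement is a \emph{conjecture} in the paper, and the paper does not prove it; you correctly recognize that, and your ``proof'' is really an honest research-program sketch rather than an argument. What the paper does provide as partial evidence is quite different from what you outline. Proposition~\ref{lowdimensionalcase} establishes the flag-level inequality $\alpha_{\{i\}}(\Phi_P)\,\alpha_{\{j\}}(\Phi_P) \geq \alpha_{\{i,j\}}(\Phi_P)$ for arbitrary Gorenstein$^*$ posets, by translating the $\alpha_S$ into flag $h$-numbers of $P$ and then invoking Stanley's theorem that the flag $h$-vector of a balanced Cohen--Macaulay complex is itself the flag $f$-vector of a balanced complex (\cite[III, Theorem 4.6]{St2}); a direct count of rainbow faces then gives the inequality, and Corollary~4.5 deduces the conjecture for $n\leq 5$. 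This route sidesteps shellings and Stanley's recursion entirely and works for all Gorenstein$^*$ posets, but only delivers a necessary inequality, not a construction of the colored complex.

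Your proposed route---lifting the inductive bookkeeping of Proposition~\ref{prop:main} from $\dd$-polynomials to the full flag data, using the dictionary $F_{w\cc}=F_w$, $F_{w\dd}=F_w\cup\{n-1\}$---is a sensible direction for the S*-shellable case, and the two obstructions you flag are real: (i) the inductive invariant would have to track actual nested complexes (not just numerical $k$-goodness), which is a genuine strengthening that the paper's $k$-good/$k$-FFK machinery does not supply; and (ii) nothing in this approach touches non-shellable Gorenstein$^*$ posets, which the conjecture covers. So the proposal contains no error, but also no proof: it accurately matches the status of the problem in the paper, while approaching the available partial evidence from a different (recursive/constructive) angle than the paper's (flag-$h$-vector/inequality) one.
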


Thus the above conjecture states that the \cd-index is itself the flag $f$-vector of a colored complex.
If the above conjecture is true
then $\Phi_P(1,\dd)=1+ f_0(\Delta)\dd + \cdots + f_{\lfloor \frac n 2 \rfloor-1}(\Delta)\dd^{{\lfloor \frac n 2 \rfloor}}$.
Although $\Delta$ is $(n-1)$-colored,
this fact implies Theorem \ref{main}.
Indeed, since $f_S(\Delta)=\alpha_S(\Phi_P)=0$ if $S$ has consecutive numbers,
if $c: V \to [n-1]$ is an $(n-1)$-coloring map of $\Delta$ then the map $\hat c : V \to [\lfloor \frac n 2 \rfloor ]$
defined by $\hat c (v)=\lfloor \frac {c(v) +1} {2}\rfloor$ is an $\lfloor \frac n 2 \rfloor$-coloring map of $\Delta$.

The next result supports the conjecture in low dimension.

\begin{proposition}
\label{lowdimensionalcase}
Let $P$ be a Gorenstein* poset of rank $n+1$. For all $i,j \in [n-1]$,
$$\alpha_{\{i\}}(\Phi_P) \alpha_{\{j\}}(\Phi_P) \geq \alpha_{\{i,j\}}(\Phi_P).$$
\end{proposition}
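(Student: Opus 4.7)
The plan is to translate the inequality into the language of flag $h$-numbers of $P$ via explicit formulas, then extract the pointwise bound from a sum identity implied by a trivial flag $f$-vector inequality. Throughout, I can assume $j\geq i+2$, since otherwise $\alpha_{\{i,j\}}(\Phi_P)=0$ and the inequality is trivial.

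First, I would expand $\cd$-monomials in the $\aaa\bb$-basis. Since each $\dd=\aaa\bb+\bb\aaa$ contributes exactly one $\bb$, only $\cd$-monomials with at most one (resp.\ two) occurrences of $\dd$ can contribute to the $\aaa\bb$-monomial $u_{\{i\}}$ (resp.\ $u_{\{i,j\}}$). A straightforward tracking of these contributions yields, using the conventions $\alpha_{\{0\}}=\alpha_{\{n\}}=0$ and $\alpha_S=0$ for $S\notin\A$:
\[
h_{\{i\}}(P) = 1 + \alpha_{\{i-1\}}(\Phi_P) + \alpha_{\{i\}}(\Phi_P),
\]
\[
h_{\{i,j\}}(P) = 1 + \sum_{a\in\{i-1,i\}}\alpha_{\{a\}}(\Phi_P) + \sum_{b\in\{j-1,j\}}\alpha_{\{b\}}(\Phi_P) + \sum_{(a,b)\in\{i-1,i\}\times\{j-1,j\}}\alpha_{\{a,b\}}(\Phi_P).
\]

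Next, substituting $h_{\{k\}}(P)=f_{\{k\}}(P)-1$ and $h_{\{i,j\}}(P)=f_{\{i,j\}}(P)-f_{\{i\}}(P)-f_{\{j\}}(P)+1$, and expanding the product $(h_{\{i\}}-1)(h_{\{j\}}-1)=\sum_{(a,b)}\alpha_{\{a\}}\alpha_{\{b\}}$, I would obtain the sum identity
\[
\sum_{(a,b)\in\{i-1,i\}\times\{j-1,j\}}\!\!\bigl(\alpha_{\{a\}}(\Phi_P)\,\alpha_{\{b\}}(\Phi_P) - \alpha_{\{a,b\}}(\Phi_P)\bigr) = f_{\{i\}}(P)\,f_{\{j\}}(P) - f_{\{i,j\}}(P) \geq 0,
\]
where the right-hand side is manifestly non-negative since $f_{\{i,j\}}(P)$ counts pairs $(\sigma,\tau)$ of ranks $(i,j)$ with $\sigma<\tau$, bounded above by the product $f_{\{i\}}(P)\,f_{\{j\}}(P)$.

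Finally, to deduce the pointwise inequality, I would argue by induction on $j-i$. At the ``degenerate'' pairs appearing in the sum, the slack $\alpha_{\{a\}}\alpha_{\{b\}}-\alpha_{\{a,b\}}$ is manifestly non-negative: if $a=0$ or $b=n$ then both terms vanish, while if $b=a+1$ then $\alpha_{\{a,b\}}=0$ and $\alpha_{\{a\}}\alpha_{\{b\}}\geq 0$ by Karu's theorem. In the base case $j=i+2$ with $i=1$, only the pair $(i,j)$ is non-degenerate, so the identity directly gives the claim. For larger $j-i$, one would combine the inductive hypothesis for the three smaller pairs $(i-1,j-1),(i-1,j),(i,j-1)$ with careful rearrangement.

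The hard part will be this last inductive step: the sum identity alone does not yield the pointwise bound, since a naive application of the inductive hypothesis upper-bounds the smaller slacks (via $\alpha_{\{a\}}\alpha_{\{b\}}\geq \alpha_{\{a,b\}}$), whereas isolating the $(i,j)$-slack from the identity requires \emph{lower} bounds on them. Closing the induction will likely require either a refined identity expressing each individual slack as a manifestly non-negative combinatorial count---e.g., of certain ``non-incident'' pairs in $P$, built from the diamond property of Gorenstein* posets---or an inductive reduction to a smaller Gorenstein* poset (such as an open interval of $P$) whose $\cd$-index controls the three smaller slacks simultaneously.
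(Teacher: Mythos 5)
Your proposal has a genuine gap, and in fact the base case of your induction is already broken, not just the inductive step you flag as ``the hard part.''

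Take $(i,j)=(1,3)$. Your identity reads
$$\alpha_{\{1\}}\alpha_{\{2\}} + \alpha_{\{1\}}\alpha_{\{3\}} - \alpha_{\{1,3\}} = f_{\{1\}}(P)f_{\{3\}}(P) - f_{\{1,3\}}(P) \geq 0,$$
since the slacks at $(0,2),(0,3)$ vanish but the slack at $(1,2)$ equals $\alpha_{\{1\}}\alpha_{\{2\}}$, which need not be zero (and indeed is generically positive). The identity therefore only yields $\alpha_{\{1\}}\alpha_{\{3\}} \geq \alpha_{\{1,3\}} - \alpha_{\{1\}}\alpha_{\{2\}}$, which is strictly weaker than the claim. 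The same defect -- that the off-target slacks are non-negative and thus \emph{subtract} from rather than bound the target slack -- propagates through every step. The root cause is your choice of flag $h$-numbers: in $h_{\{i\}}(P)$ the coefficient $\alpha_{\{i\}}$ is ``smeared'' together with $\alpha_{\{i-1\}}$, and the product $h_{\{i\}}h_{\{j\}}$ inherits this smearing over a $2\times 2$ block. Moreover, the inequality $f_{\{i\}}f_{\{j\}}\geq f_{\{i,j\}}$ you rely on is an essentially content-free counting bound that holds for any graded poset; it cannot encode the Gorenstein* hypothesis and is too weak to single out the one slack you want.

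The paper's proof avoids both problems by choosing a different set of flag $h$-numbers: $h_{[i]}(P)$, $h_{\{i+j+1,\dots,n\}}(P)$, and $h_{[i]\cup\{i+j+1,\dots,n\}}(P)$, i.e.\ the coefficients of $\bb^i\aaa^{n-i}$, $\aaa^{i+j}\bb^{n-i-j}$, and $\bb^i\aaa^j\bb^{n-i-j}$ in $\Psi_P(\aaa,\bb)$. When $S$ is a union of initial and final intervals, a $\dd$-block in a $\cd$-monomial can only straddle one of the two boundaries, so exactly the terms $\alpha_\emptyset,\alpha_{\{i\}},\alpha_{\{i+j\}},\alpha_{\{i,i+j\}}$ contribute, with coefficient $1$ each. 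This isolates the $\alpha$'s cleanly: the target inequality reduces exactly to $h_{[i]}(P)\,h_{\{i+j+1,\dots,n\}}(P) \geq h_{[i]\cup\{i+j+1,\dots,n\}}(P)$. That inequality is genuinely non-trivial; the paper establishes it via Stanley's theorem that the flag $h$-vector of a balanced Cohen-Macaulay complex (in particular, of a Gorenstein* poset) is the flag $f$-vector of some $n$-colored simplicial complex $\Delta$, whereupon the inclusion $\Delta_{S\cup T}\subset\{F\cup G: F\in\Delta_S, G\in\Delta_T\}$ for disjoint color sets $S,T$ gives the product bound. This is the deep input your argument is missing: the Gorenstein* (or even just Cohen-Macaulay) hypothesis enters precisely through this realization of the flag $h$-vector as a flag $f$-vector.
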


\begin{proof}
Let $(h_S(P): S \subset [n])$ be the flag $h$-vector of $P$.
Let $\{i,i+j\} \subset [n-1]$ with $j \geq 2$.
What we must prove is
$\alpha_{\{i\}}(\Phi_P) \alpha_{\{i+j\}}(\Phi_P) \geq \alpha_{\{i,i+j\}}(\Phi_P).$

Observe that
\begin{eqnarray*}
h_{[i]\cup\{i+j+1,\dots,n\}}(P)
&=&
\alpha_{\{i,i+j\}}(\Phi_P) + \alpha_{\{i\}}(\Phi_P) + \alpha_{\{i+j\}}(\Phi_P) + \alpha_\emptyset(\Phi_P),\\
h_{[i]}(P)
&=&\alpha_{\{i\}}(\Phi_P) + \alpha_\emptyset(\Phi_P),\\
h_{\{i+j+1,\dots,n\}}(P)
&=&\alpha_{\{i+j\}}(\Phi_P) + \alpha_\emptyset(\Phi_P)
\end{eqnarray*}
(as $h_{[i]\cup\{i+j+1,\dots,n\}}(P)$ is the coefficient of
$\bb^i \aaa^j \bb^{n-i-j}$ in $\Psi_P(\aaa,\bb)$, etc.).
Since $\alpha_{\emptyset}=1$,
it is enough to prove that
$$h_{[i]}(P)h_{\{i+j+1,\dots,n\}}(P)
\geq h_{[n-i-j]\cup\{n-i+1,\dots,n\}}(P).$$
It follows from \cite[III, Theorem 4.6]{St2} that there is an $n$-colored simplicial complex $\Delta$ with a coloring map $c: V \to [n]$ such that
$f_S(\Delta)=h_S(P)$ for all $S \subset [n]$.
Let
$$\Delta_S=\{F \in \Delta: c(F)=S\}$$
for $S \subset [n]$.
Then it is clear that
$$\Delta_{[i]\cup\{i+j+1,\dots,n\}}
\subset \{F \cup G: F \in \Delta_{[i]},\ G \in \Delta_{\{i+j+1,\dots,n\}}\},$$
which implies the desired inequality.
\end{proof}

It is straightforward that the above proposition proves the next statement.

\begin{corollary}
Conjecture \ref{conj:flag f-vector} holds for $n \leq 5$.
\end{corollary}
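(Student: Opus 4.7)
The plan is to reduce the verification to Proposition \ref{lowdimensionalcase} via an elementary construction. The key preliminary observation I would make is that for $n \leq 5$, every $S \in \A$ satisfies $|S| \leq 2$: containing three pairwise non-consecutive elements of $[n-1]$ requires at least $\{1,3,5\} \subseteq [n-1]$, forcing $n \geq 6$. Consequently, for $n \leq 5$ we automatically have $\alpha_S(\Phi_P) = 0$ whenever $|S| \geq 3$, and only the values $\alpha_\emptyset(\Phi_P) = 1$, the vertex counts $\alpha_{\{i\}}(\Phi_P)$ for $i \in [n-1]$, and the edge counts $\alpha_{\{i,j\}}(\Phi_P)$ for $\{i,j\} \in \A$ need to be realized by a single $(n-1)$-colored simplicial complex.

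The construction proceeds in two steps. First, I would introduce disjoint vertex sets $V_1,\dots,V_{n-1}$ with $|V_i| = \alpha_{\{i\}}(\Phi_P)$ and define the coloring $c : V := \bigsqcup_i V_i \to [n-1]$ by $c|_{V_i} \equiv i$. Second, for each pair $\{i,j\} \in \A$ (equivalently, $|i-j| \geq 2$), Proposition \ref{lowdimensionalcase} supplies the bound
$$\alpha_{\{i,j\}}(\Phi_P) \leq \alpha_{\{i\}}(\Phi_P) \cdot \alpha_{\{j\}}(\Phi_P) = |V_i| \cdot |V_j|,$$
so I may select exactly $\alpha_{\{i,j\}}(\Phi_P)$ edges from the complete bipartite graph on $V_i \cup V_j$. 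Taking $\Delta$ to be the $1$-dimensional simplicial complex with vertex set $V$ and this edge set yields a candidate that is automatically properly $(n-1)$-colored, because no chosen edge connects vertices whose colors differ by $1$.

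To conclude, I would verify $f_S(\Delta) = \alpha_S(\Phi_P)$ for every $S \subseteq [n-1]$: the cases $S = \emptyset$, $|S|=1$, and $S \in \A$ with $|S|=2$ hold by construction; for $S \notin \A$ with $|S|=2$, both sides vanish (on the left because no edges connect adjacent color classes, on the right by Definition \ref{def:cd-flag-f-vector}); and for $|S| \geq 3$ both sides vanish using the observation of the first paragraph together with the fact that $\Delta$ has no $2$-faces. I do not expect any real obstacle in this plan, since all the substantive content is already encapsulated in Proposition \ref{lowdimensionalcase}; what makes the argument particular to the regime $n \leq 5$ is precisely that $\A$ contains only sets of size at most $2$, so the single pairwise inequality suffices. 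For $n \geq 6$ the analogous construction would require higher-order inequalities bounding $\alpha_S(\Phi_P)$ for $|S|\geq 3$ in terms of lower-degree data, which the present method does not provide.
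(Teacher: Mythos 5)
Your proof is correct and fills in precisely the construction that the paper leaves implicit (the paper only asserts that the corollary is a "straightforward" consequence of Proposition \ref{lowdimensionalcase}, giving no details). The observation that for $n\leq 5$ every set in $\A$ has size at most $2$, the bipartite edge selection made possible by the inequality $\alpha_{\{i,j\}}\leq\alpha_{\{i\}}\alpha_{\{j\}}$ together with nonnegativity of $\cd$-coefficients, and the check that faces with $S\notin\A$ or $|S|\geq 3$ contribute zero on both sides, are exactly the intended argument.
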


\subsection*{Non-existence of $\dd$-polynomials}
For a Gorenstein* poset $P$,
we call $\Phi_P(1,\dd)$ \textit{the $\dd$-polynomial of $P$}.
It is a challenging problem to classify all possible $\dd$-polynomials of Gorenstein* posets,
which give a complete characterization of all possible face vectors of Gorenstein* order complexes
since knowing $\dd$-polynomials is equivalent to knowing $\gamma$-vectors.
The problem is open even for the $3$-dimensional case.
To study this problem,
by virtue of Theorem \ref{main}, it is natural to ask which FFK vector is realizable as the $\dd$-polynomial of a Gorenstein* poset.
The next result shows that not all $\lfloor \frac n 2 \rfloor$-FFK vectors are realizable as the $\dd$-polynomial of a Gorenstein* poset of rank $n+1$.

First recall that the ordinal sum $Q_1 + Q_2$ of two disjoint posets $Q_1$ and $Q_2$ is the poset whose elements are the union of elements in $Q_1$ and $Q_2$ and whose relations are those in $Q_1$ union those in $Q_2$ union all $q_1<q_2$ where $q_1\in Q_1$ and $q_2\in Q_2$.
For Gorenstein* posets $Q_1$ and $Q_2$,
the poset
$Q_1 * Q_2=(Q_1-\{\hat 1\}) + (Q_2 -\{\hat 0\})$
is called the \textit{join} of $Q_1$ and $Q_2$,
and $\Sigma Q_1=Q_1 * B_2$, where $B_2$ is a Boolean algebra of rank $2$,
is called the \textit{suspension} of $Q_1$.
By \cite[Lemma 1.1]{St1},
$\Phi_{Q_1*Q_2}(\cc,\dd)=\Phi_{Q_1}(\cc,\dd)\cdot \Phi_{Q_2}(\cc,\dd)$.

\begin{proposition}\label{prop:rank5}
Let $P$ be a Gorenstein* poset of rank $5$,
and let
$$\Phi_P(\cc,\dd)= \cc^4 +  \alpha_{\{1\}} \cc^2\dd +  \alpha_{\{2\}} \cc\dd\cc
 +  \alpha_{\{3\}} \dd\cc^2+  \alpha_{\{1,3\}} \dd^2$$
be its $\cc\dd$-index.
Suppose $\alpha_{\{2\}}=0$.
Then there are Gorenstein* posets $P_1$ and $P_2$ of rank $3$
such that $P=P_1*P_2$.
In particular, $\alpha_{\{1,3\}}=\alpha_{\{1\}}\alpha_{\{3\}}$.
\end{proposition}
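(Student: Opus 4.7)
The plan is to deduce from $\alpha_{\{2\}}=0$ that $P=P_1*P_2$ for two rank-3 Gorenstein$^*$ posets $P_1,P_2$; the identity $\alpha_{\{1,3\}}=\alpha_{\{1\}}\alpha_{\{3\}}$ then follows immediately from the product formula $\Phi_{P_1*P_2}=\Phi_{P_1}\Phi_{P_2}$ recalled just before the proposition. The strategy is local-to-global: first rigidify each upper interval $[x,\hat 1]$ at an atom $x$ (and dually at each coatom) into a join with $B_2$, then glue these local decompositions using the fact that $\sd(P)$ is a connected (homology) 3-sphere.

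For the local step, let $V_i$ denote the set of rank-$i$ elements of $P$. Expanding the $\cd$-index of $P$ in $\aaa,\bb$ and using the Eulerian identity $f_{\{1,2\}}(P)=2f_{\{2\}}(P)$ (each rank-2 interval has exactly two atoms) yields $\alpha_{\{2\}}=f_{\{2\}}(P)-f_{\{1\}}(P)$, so the hypothesis means $|V_1|=|V_2|$. For each atom $x\in V_1$, the interval $[x,\hat 1]$ is a rank-4 Gorenstein$^*$ poset with $\cd$-index $\cc^3+\epsilon_1(x)\cc\dd+\epsilon_2(x)\dd\cc$, and $\epsilon_i(x)\ge 0$ by Karu's theorem. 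A direct expansion gives that the number of rank-2 elements of $P$ covering $x$ equals $2+\epsilon_2(x)$. Summing over $x\in V_1$ and using the Eulerian identity once more gives $\sum_x\epsilon_2(x)=2(|V_2|-|V_1|)=0$, and nonnegativity forces $\epsilon_2(x)=0$ for every atom $x$. Hence $[x,\hat 1]$ has exactly two atoms; since every rank-2 interval contains both atoms below it and every rank-3 Gorenstein$^*$ interval contains at least two atoms, every non-$\hat 0$ non-atom element of $[x,\hat 1]$ lies above both atoms, producing a join decomposition $[x,\hat 1]=B_2*Q'_x$ with $Q'_x$ a rank-3 Gorenstein$^*$ poset. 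Since the palindrome $\cc\dd\cc$ has the same coefficient in the $\cd$-index of the dual poset, the analogous decomposition $[\hat 0,w]=Q''_w*B_2$ holds for every coatom $w\in V_4$.

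For the global step, let $B_{12}$ and $B_{34}$ denote the bipartite covering graphs on $V_1\sqcup V_2$ and $V_3\sqcup V_4$; the local step shows they are $2$-regular, hence disjoint unions of even cycles. The join decomposition of $[x,\hat 1]$ implies that the two rank-2 elements covering $x$ share with $x$ the same set of rank-3 elements above; iterating along the cycles of $B_{12}$ gives a common set $R_C\subseteq V_3$ of rank-3 upper bounds for all vertices in each connected component $C$ of $B_{12}$, and symmetrically a common set $D_{C'}\subseteq V_2$ for each component $C'$ of $B_{34}$. Now for each $z\in V_3$ the vertex link $\sd((\hat 0,z))*\sd((z,\hat 1))$ must be a $2$-sphere; since $\sd((z,\hat 1))=S^0$, we deduce $\sd((\hat 0,z))=S^1$, so the rank-1 and rank-2 elements below $z$ all lie in a single component of $B_{12}$. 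The symmetric statement at $y\in V_2$ places $R_C$ inside a single component of $B_{34}$. These constraints pair up the components of $B_{12}$ and $B_{34}$ via a bijection $\sigma$, and all cross-level comparabilities in $P$ respect the pairing $(C,\sigma(C))$. Consequently $\sd(P)$ splits as a disjoint union of subcomplexes indexed by the components, and connectedness of the homology $3$-sphere $\sd(P)$ forces $B_{12}$ and $B_{34}$ each to consist of a single cycle, with every element of $V_1\cup V_2$ below every element of $V_3\cup V_4$.

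The decomposition $P=P_1*P_2$ now follows by taking $P_1:=\{\hat 0\}\cup V_1\cup V_2\cup\{\hat 1_1\}$ and $P_2:=\{\hat 0_2\}\cup V_3\cup V_4\cup\{\hat 1\}$ with the inherited order and formal top/bottom added; each $P_i$ is then rank-3 Gorenstein$^*$ (its order complex being the corresponding cycle), and the multiplicativity of the $\cd$-index gives $\alpha_{\{1,3\}}=\alpha_{\{1\}}\alpha_{\{3\}}$. The main obstacle is the third paragraph: ruling out multi-cycle configurations for $B_{12}$ or $B_{34}$ (which would give ``permutation-matrix'' cross structures making $\sd(P)$ a disjoint union of join-spheres rather than a single sphere) requires the topological input that $\sd(P)$ is connected, beyond the purely numerical information captured by the flag $h$-vector.
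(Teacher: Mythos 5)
Your proof is correct, and it takes a genuinely different route from the paper's in both the local and global parts of the argument.

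\medskip
\noindent\emph{Local step.} The paper proves directly that every rank-$4$ element $F$ covers exactly two rank-$3$ elements by observing that $P$ is an EK-subdivision of $\Sigma([\hat 0,F])$, so Lemma~\ref{2-4} forces the coefficient of $\cc\dd\cc$ in $\Phi_{\Sigma([\hat 0,F])}$, hence of $\cc\dd$ in $\Phi_{[\hat 0,F]}$, to vanish; then $h_{\{3\}}([\hat 0,F])=1$ gives $f_{\{3\}}=2$. You instead obtain the dual local statement at atoms by a counting argument: you show $\alpha_{\{2\}}=f_{\{2\}}-f_{\{1\}}$, deduce $|V_1|=|V_2|$, express the degree of an atom $x$ in $B_{12}$ as $2+\epsilon_2(x)$ where $\epsilon_2(x)$ is a $\cd$-coefficient of $[x,\hat 1]$, and then use the Eulerian identity $f_{\{1,2\}}=2f_{\{2\}}$ plus Karu nonnegativity to force $\epsilon_2(x)=0$ for every $x$. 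Both arguments implicitly rest on Karu's theorem (the paper needs the coefficient in $\Phi_{\Sigma([\hat 0,F])}$ to be $\geq 0$ before using the EK inequality); yours avoids the Ehrenborg--Karu subdivision machinery entirely, at the cost of an averaging step and the dualization to get from atoms to coatoms.

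\medskip
\noindent\emph{Global step.} The paper's route is to show $P_2\cup\{\hat 0\}$ is Gorenstein$^*$: once each coatom covers exactly two rank-$3$ elements, $P_2^*$ minus its top is a $2$-regular bipartite graph, hence a disjoint union of $1$-spheres, and Cohen--Macaulayness of rank-selected subposets (\cite[III, Theorem 4.5]{St2}) forces connectivity, i.e., a single $S^1$. You instead analyze the two covering graphs $B_{12}$ and $B_{34}$ directly, show that the local join decompositions propagate along cycles to produce a well-defined pairing of their components that makes $\sd(P)$ a disjoint union of subcomplexes, and then invoke connectedness of the homology $3$-sphere $\sd(P)$ to force a single component; the final identification of each $\sigma((\hat 0,z))$ with the entire cycle $B_{12}$ uses that a cycle graph has no proper cyclic subgraph. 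Your argument is more hands-on and requires only connectedness of $\sd(P)$ (plus the standard facts that intervals of Gorenstein$^*$ are Gorenstein$^*$ and rank-$2$ intervals are diamonds), whereas the paper's is shorter but leans on the rank-selection theorem. Both hinge on the same structural picture: $2$-regularity of the covering bipartite graphs, plus a topological input to reduce from a disjoint union of cycles to a single cycle. Your closing remark that the flag $h$-vector alone cannot rule out multi-cycle configurations is accurate and is precisely why both proofs must invoke a topological property of $\sd(P)$ at this stage.
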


\begin{proof}
Let $r$ denote the rank function $r:P\rightarrow \{0,1,\dots,5\}$ ($r(\hat{0})=0, r(\hat{1})=5$).
Let
$P_1:=\{F\in P:\ r(F)\leq 2\}$
and
$P_2:=\{F\in P:\ r(F)\geq 3\}$.

As $P$ is Gorenstien*, to show that
$P=P_1 + P_2 $ it is enough to show that $P_2\cup \{\hat{0}\}$ is Gorenstien* (as a Gorenstien* poset contains no proper subposet which is Gorenstien* of the same rank, and each interval $[F,\hat{1}]$ with $r(F)=2$ in $P$ is Gorenstien*).
For this, it is enough to show that any rank $4$ element in $P$ covers exactly two rank $3$ elements in $P$. Indeed, this guarantees that the dual poset to $P_2$, denoted $P_2^*$, is the face poset of a union of CW $1$-spheres, and as $P$ is Gorenstien* so is its dual $P^*$, hence $P_2^*$ is Cohen-Macaulay
since $P_2^*$ is a rank selected poset \cite[III, Theorem 4.5]{St2}, which implies that $P_2^*$ is the face poset of one CW $1$-sphere, i.e.\ $P_2\cup \{\hat{0}\}$ is Gorenstien*.

Let $F$ be a rank $4$ element of $P$. Then $P$ is a subdivision of $\Sigma ([\hat 0, F])$ (Recalling \cite[Definition 2.6]{EK}, this is shown by the map $\phi:P\rightarrow \Sigma ([\hat 0, F])$, $\phi(\sigma)=\sigma$ if $\sigma < F$, $\phi(\sigma)=\sigma_1$ if $\sigma$ and $F$ are incomparable, and $\phi(F)=\sigma _2$, where $\sigma_1,\sigma_2$ are the rank $4$ elements in  $\Sigma ([\hat 0, F]$).
Thus, by Lemma \ref{2-4}, the coefficient of $\cc\dd\cc$ in the $\cd$-index of  $\Sigma ([\hat 0, F])$ is zero, hence the coefficient of the monomial $\cc\dd$ in the $\cd$-index of $[\hat 0, F]$ is zero.

This fact implies, when expanding the $\cd$-index of $[\hat 0, F]$ in terms of $\aaa,\bb$, that $h_{\{3\}}([\hat 0, F])$ equals the coefficient of $\cc^3$, namely $h_{\{3\}}([\hat 0, F])=1$.
Switching to the flag $f$-vector of $[\hat 0, F]$ we get
$f_{\{3\}}([\hat 0, F])= h_{\emptyset}([\hat 0, F]) + h_{\{3\}}([\hat 0, F]) =1+1=2$.
Thus, $F$ covers exactly two rank $3$ elements in $P$.
\end{proof}

\begin{example}
Consider the $2$-FFK vector $(1,6,7)$.
We claim that $\Phi_P(1,\dd) \ne 1 + 6 \dd + 7\dd^2$ for all Gorenstein$^*$ poset $P$ of rank $5$.
Indeed,
if $\Phi_P(1,\dd)=1 + 6 \dd + 7\dd^2$, then $\alpha_{\{1,3\}}=7$.
Then $\alpha_{\{1\}}+\alpha_{\{3\}}=6$
and $\alpha_{\{2\}}=0$ by Proposition \ref{lowdimensionalcase}, which contradicts Proposition \ref{prop:rank5}.

A similar argument shows that $(1,2a,a^2-2)$, where $a \geq 3$, is $2$-FFK,
but not realizable as the $\dd$-polynomial of a Gorenstein* poset of rank $5$.
\end{example}


\end{document}